\title{On Newstead's Mayer-Vietoris argument in characteristic 2}
\author{Christopher Scaduto {\;}\&\, Matthew Stoffregen}
\date{}
\newcolumntype{Y}{>{\centering\arraybackslash}X}
\newcommand{\Z}{\mathbb{Z}}
\newcommand{\F}{\mathbb{F}}
\newcommand{\Q}{\mathbb{Q}}
\newtheorem{prop}{Proposition}
\newtheorem{theorem}{Theorem}
\newtheorem{lemma}{Lemma}
\newtheorem{corollary}{Corollary}
\begin{document}

\maketitle 

\begin{abstract}
Consider the moduli space of framed flat $U(2)$ connections with fixed odd determinant over a surface. Newstead combined some fundamental facts about this moduli space with the Mayer-Vietoris sequence to compute its betti numbers over any field not of characteristic two. We adapt his method in characteristic two to produce conjectural recursive formulae for the mod two betti numbers of the framed moduli space which we partially verify. We also discuss the interplay with the mod two cohomology ring structure of the unframed moduli space.
\end{abstract}

\vspace{.0cm}

\section{Introduction}

Let $\Sigma_g$ be a compact surface of genus $g$, and let $N_g$ be the moduli space of flat $SU(2)$ connections on $\Sigma_g$ having holonomy $-1$ around a single puncture $p$. If we write $A_1,B_1,\ldots,A_g,B_g$ for the usual generators of the free group $\pi_1(\Sigma_g\setminus p)$, then $N_g$ is homeomorphic to $f^{-1}_g(-1)/SU(2)$, in which

\vspace{.2cm}
\[
    f_g :SU(2)^{2g} \longrightarrow SU(2), \qquad f_g(A_1,B_1,\ldots,A_g,B_g) \; = \; \prod_{i=1}^{g} [A_i,B_i],
\]
\vspace{.2cm}

\noindent and the action of $SU(2)$, which descends to a free $SO(3)$ action, is by simultaneous conjugation of the $2g$ factors. By a classical result of Narasimhan-Seshadri, $N_g$ may be identified with the moduli space of rank two stable holomorphic bundles over a Riemann surface of genus $g$ with fixed odd determinant. The moduli space of {\emph{framed}} flat connections is given by

\vspace{.15cm}
\[
    N_g^\# \; = \; f_g^{-1}(-1)
\]
\vspace{.15cm}

\noindent and forms an $SO(3)$-principal bundle over the moduli space $N_g$.\\

The betti numbers of the moduli space $N_g$ have been computed in a variety of ways. The first way, which was originally done for any coefficient field not of characteristic 2, is due to Newstead \cite{newstead-mv}. The argument, which is quite elementary, uses a Mayer-Vietoris sequence to compute formulae for the betti numbers of the framed moduli space which are recursive in $g$, and then uses the Gysin sequence for the $SO(3)$-fibration $N_g^\#$ to obtain the betti numbers for $N_g$. Subsequently, Harder-Narasimhan \cite{hn} and Atiyah-Bott \cite{ab} gave very different and more sophisticated proofs, respectively: the first number-theoretic, and the latter using infinite-dimensional Morse theory on the Yang-Mills functional. These two methods work for higher rank moduli as well. Finally, we mention the elegant proof of Thaddeus \cite{thaddeus-morse}, which shows that $(A_i,B_i) \longmapsto \text{tr}(A_g)$ is a perfect Morse-Bott function on $N_g$, as was observed by Jeffrey-Weitsman \cite{jw}.\\

Newstead's original proof shows that the integral cohomology groups of $N_g$ and $N_g^\#$ have no torsion other than 2-torsion. In their work, Atiyah-Bott showed that the integral cohomology of $N_g$ is in fact torsion-free, which can also be seen from the proof of Thaddeus. However, the space $N_g^\#$ generally has 2-torsion, as is indicated by the fact that the $g=1$ framed moduli space, which is a bundle over the point $N_1$, is homeomorphic to $SO(3)$.\\

In this article we investigate Newstead's argument in characteristic 2 with the goal of computing the cohomology of $N_g^\#$ with $\Z/2$ coefficients. Although we cannot completely compute the betti numbers from the elementary methods used here, we provide evidence for simple recursive formulae similar to Newstead's formulae for the rational betti numbers from \cite{newstead-mv}. Specifically, we conjecture that equality holds in all the inequalities appearing in the following:

\vspace{.55cm}
\begin{theorem}\label{theorem:main}
    Write $h_r^g = \dim H^r(N_g^\#;\Z/2)$. Then we have the following:\vspace{.20cm}
\begin{align*}
	h_r^{g+1} \; & \geqslant \; h_{r-2}^{g} + 2 h_{r-3}^{g} + h_{r-4}^{g} + m_r^g - m_{r-4}^g &  (r\leqslant 3g-1)\tag*{$(\text{I})_r$} \label{eq:rec1} \\
	&&\nonumber\\
	h_r^{g+1} \; & \geqslant \; 4h_{3g}^{g} + m_{3g}^g - m_{3g-3}^g &  (3g \leqslant r \leqslant 3g+3)\tag*{$(\text{II})_r$} \label{eq:rec2}\\
	&&\nonumber\\
	h_r^{g+1} \; & \geqslant \; h_{r-2}^{g} + 2 h_{r-3}^{g} + h_{r-4}^{g} + m_{r-3}^g - m_{r+1}^g &  (r\geqslant 3g+4)  \tag*{$(\text{III})_r$} \label{eq:rec3}
\nonumber
\end{align*}

\vspace{.3cm}
\noindent in which $m_r^g$ is the coefficient of $t^r$ in the polynomial $(1+t^3)^{2g}$. Further:

\begin{enumerate}
    \item[(i)] Equality holds in {\emph{\ref{eq:rec1}}} for $r\equiv 2$ (mod 3) and $r\leqslant 3g-1$.
    \item[(ii)] Equality holds in the expression for $h_k^{g+1}-h_{k-1}^{g+1}$ obtained by assuming equality in {\emph{\ref{eq:rec1}}} for $r\in\{k,k-1\}$ where $k\equiv 1$ (mod 3) and $k\leqslant 3g-1$. Also, $h_{3g+1}^{g+1} = h^{g+1}_{3g}$.

\end{enumerate}
\end{theorem}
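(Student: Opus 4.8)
The plan is to adapt Newstead's Mayer--Vietoris argument to $\Z/2$ coefficients, producing a long exact sequence which in Newstead's characteristic-$\neq 2$ setting splits into short exact pieces but here does not, and then to read off $(\text{I})$--$(\text{III})$ as the rank inequalities that exactness forces. Concretely I would add one handle at a time. Writing $f_{g+1}=f_g\cdot c$ with $c(A,B)=[A_{g+1},B_{g+1}]$, project $N_{g+1}^\#$ to the first $2g$ factors by $\rho\colon N_{g+1}^\#\to SU(2)^{2g}$, $(w,A,B)\mapsto w$. The fibre over $w$ is $c^{-1}(-f_g(w)^{-1})$, so, since $1$ is the only critical value of $c=f_1$ and $-1$ is regular, $\rho$ is a fibre bundle with fibre $N_1^\#\cong SO(3)$ over the open set $\mathcal U=SU(2)^{2g}\setminus N_g^\#$, degenerating over $N_g^\#=f_g^{-1}(-1)$ to the fibre $c^{-1}(1)$, the variety of commuting pairs in $SU(2)$. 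Covering $SU(2)^{2g}$ by $\mathcal U$ and a tubular neighbourhood of $N_g^\#$ and applying Mayer--Vietoris to $\rho^{-1}$ of this cover yields the sequence to analyse.

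The three relevant cohomologies enter as follows. Since $N_g^\#$ has trivial codimension-three normal bundle in $SU(2)^{2g}$, its mod-$2$ Euler class vanishes, so the Gysin sequence of the pair $(SU(2)^{2g},\mathcal U)$ degenerates to short exact sequences $0\to H^r(SU(2)^{2g})\to H^r(\mathcal U)\to H^{r-2}(N_g^\#)\to 0$; this is exactly where the ranks $m_r^g=\dim H^r(SU(2)^{2g};\Z/2)$, the coefficients of $(1+t^3)^{2g}$, enter, alongside a shifted copy $h_{r-2}^g$ of $N_g^\#$. Feeding this through the fibre cohomology $H^*(SO(3);\Z/2)=\Z/2[w]/(w^4)$ and splicing in the degenerate commuting-variety part over the tubular neighbourhood is what, after accounting for the nontriviality of the $SO(3)$-bundle and the gluing, should assemble the combination $h_{r-2}^g+2h_{r-3}^g+h_{r-4}^g$, i.e. the coefficient of $t^r$ in $t^2(1+t)^2P_g(t)$ with $P_g(t)=\sum_r h_r^g t^r$, together with the $m$-corrections; reconciling the exact coefficients here is a step I expect to require care. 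Notably the extra middle term $2t^3$, absent from the thinner handle factor in Newstead's rational computation, is precisely the footprint of the classes in $H^1$ and $H^2$ of $SO(3)$ that are invisible over $\Q$.

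Exactness then gives only inequalities: each $H^r(N_{g+1}^\#)$ receives a map from an explicitly identified group of rank $h_{r-2}^g+2h_{r-3}^g+h_{r-4}^g+m_r^g$ whose kernel is the image of an incoming connecting map of rank at most $m_{r-4}^g$, producing $(\text{I})_r$; the subtracted $m_{r-4}^g$ records the maximal possible kernel, and equality holds exactly when the succeeding connecting homomorphism vanishes. To organize the degree ranges I would invoke Poincar\'e duality on the closed manifold $N_{g+1}^\#$ of dimension $6g+3$, namely $h_r^{g+1}=h_{6g+3-r}^{g+1}$: the ranges $r\le 3g-1$ and $r\ge 3g+4$ are interchanged by $r\mapsto 6g+3-r$, which carries $(\text{I})$ to $(\text{III})$ (one checks the $m$-terms transform correctly via $m_k^g=m_{6g-k}^g$), so $(\text{III})$ follows formally from $(\text{I})$, while the central band $3g\le r\le 3g+3$ is self-dual and yields the constant bound $(\text{II})$, which I would obtain by isolating the central-degree class $h_{3g}^g$ with multiplicity $4$, the total weight of $t^2(1+t)^2$, that survives uniformly across the band.

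For the equality statements I would exploit that the $m$-terms are supported in degrees divisible by $3$. When $r\equiv 2\pmod 3$ one has $m_r^g=m_{r-4}^g=0$, so the incoming map and the potential outgoing connecting map in $(\text{I})_r$ both meet the mod-$2$ cohomology of $SU(2)^{2g}$ in a degree where it vanishes, forcing them to be zero and hence giving equality: this is $(\text{i})$. Statement $(\text{ii})$ is finer; assuming equality in adjacent degrees $k,k-1$ with $k\equiv 1\pmod 3$ I would compute the induced expression for $h_k^{g+1}-h_{k-1}^{g+1}$ and show the single remaining ambiguous differential must vanish, with $h_{3g+1}^{g+1}=h_{3g}^{g+1}$ coming from the band edge where the relevant Gysin differential is forced by degree. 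The main obstacle throughout is precisely the determination of the connecting homomorphisms in the mod-$2$ sequence: over $\Q$ they vanish for degree reasons and Newstead obtains exact formulae, but the classes in $H^1,H^2(SO(3);\Z/2)$ open up new potential targets, and showing that these maps still vanish in every degree is exactly the content of the conjectured equalities, which I do not expect the elementary bookkeeping to settle.
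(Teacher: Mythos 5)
Your geometric setup is not actually a different route: once you take the tubular neighbourhood of $N_g^\#$ to be $N_g^-=f_g^{-1}(D_-)$, the preimage $\rho^{-1}(N_g^-)$ is $N_1^+\times N_g^\#$, the complement $\rho^{-1}(\mathcal U)$ deformation retracts onto $N_1^\#\times N_g^+$, and your Mayer--Vietoris sequence is exactly the one for Newstead's decomposition (\ref{eq:mv1}) used in the paper. The problem is the computational input you feed into it. Your claim that the sequence of the pair $(SU(2)^{2g},\mathcal U)$ splits into short exact sequences $0\to H^r(SU(2)^{2g})\to H^r(\mathcal U)\to H^{r-2}(N_g^\#)\to 0$ in \emph{every} degree is false, and the justification is invalid: vanishing of the mod 2 Euler class kills the composite $i^\ast i_!$, not the Gysin pushforward $i_!\colon H^{r-3}(N_g^\#)\to H^{r}(SU(2)^{2g})$, and it is the vanishing of $i_!$ that the splitting requires. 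In fact $i_!$ is Lefschetz dual to $i_\ast\colon H_{6g-r}(N_g^\#)\to H_{6g-r}(SU(2)^{2g})$, which Newstead proves is \emph{surjective} (hence nonzero in degrees divisible by $3$) for $6g-r\leqslant 3g-1$; so the splitting fails for $r\geqslant 3g+1$. Concretely, at $r=3g+3$ your formula gives $h^g_{3g+1}+\binom{2g}{g+1}$, whereas $\dim H^{3g+3}(\mathcal U)=\check n^g_{3g+3}=h^g_{3g+1}$ by Lemma \ref{lemma:n}. The correct statement is precisely Lemma \ref{lemma:n}: the formula $\check n^g_r=h^g_{r-2}+m^g_r$ holds only for $r\leqslant 3g+1$, with a different formula in the upper range, and this range-dependence is the very source of the three ranges and differing $m$-corrections in \ref{eq:rec1}$-$\ref{eq:rec3}, so it cannot be glossed over. (Relatedly, your aside that the term $2h^g_{r-3}$ is a characteristic-2 footprint of $H^1,H^2(SO(3);\Z/2)$ is off: the same combination $h^g_{r-2}+2h^g_{r-3}+h^g_{r-4}$ appears in Newstead's rational recursion; what changes mod 2 is where equality holds.)

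The larger gap is that degree and congruence considerations alone cannot control the Mayer--Vietoris map, because its K\"{u}nneth components contain genuinely unknown maps, and your outline has no mechanism for them. Two kinds of input are missing. First, the genus-1 data: the maps $\nu^1_2,\nu^1_3$ recording how $H_1,H_2$ of the $SO(3)$ fibre include into the singular piece $N_1^+$ are not determined by any general principle; the paper can only pin them down (Lemma \ref{lemma:nu1}) by computing $h^2_r$ \emph{independently} via the Leray--Serre spectral sequence (Proposition \ref{prop:ss}) and the known ring $H^\ast(N_2;\Z/2)$, and then bootstrapping back through the $g=1$ Mayer--Vietoris sequence. Second, the genus-$g$ maps $\nu^g_r$, unknown for all $g\geqslant 2$: your argument for (i), that $m^g_r=m^g_{r-4}=0$ for $r\equiv 2$ (mod 3) forces the relevant connecting maps to vanish, misidentifies the obstruction, which is not maps into or out of $H^\ast(SU(2)^{2g})$ but the $\nu^g_r$'s; what must be shown is that they drop out of the kernel/cokernel count in those congruence classes. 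The paper achieves this by Gaussian elimination on the K\"{u}nneth diagram (Figure \ref{fig:dots2}), using the genus-1 data together with Lemma \ref{lemma:rho} (that $\rho^g_r$ is an isomorphism in the right range and congruence class), yielding Lemma \ref{lemma:lambdaker}, from which (i) and (ii) follow; and the unconditional inequalities require identifying the exact deficit as $|\ker\rho^g_r\cap\ker\nu^g_{r-2}|\geqslant 0$ (Lemma \ref{lemma:kercap}). Without these steps your proposal establishes neither the inequalities nor the equality statements.
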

\vspace{.55cm}

\noindent The (in)equalities obtained are immediately doubled: Poincar\'{e} duality turns (i) and (ii), which are statements for $r\leqslant 3g+1$, into statements about $r\geqslant 3g+2$. Indeed, \ref{eq:rec1} is transformed into \ref{eq:rec3} via duality, and (II)$_{3g}$ and (II)$_{3g+1}$ into (II)$_{3g+3}$ and (II)$_{3g+2}$, respectively.\\

The conjectural recursive equations obtained from imposing equality in \ref{eq:rec1}$-$\ref{eq:rec3} are remarkably similar to Newstead's equations for the rational betti numbers of \cite[Thm. 2']{newstead-mv}: there, equality in \ref{eq:rec1} is satisfied for $r\leqslant 3g+1$, and the rest of the betti numbers follow by Poincar\'{e} duality. This small difference in recursions, however, allows the $\Z/2$ betti numbers to grow much larger than the rational ones near the middle dimension. For example, the middle two $\Q$ betti numbers are zero, while our conjecture implies that the four middle $\Z/2$ betti numbers are the same and equal to

\vspace{.2cm}
\[
    2^{2g-1} - {2g-1 \choose g}.
\]
\vspace{.2cm}

\noindent The comparison of these betti numbers is further illustrated in Figure \ref{fig:introtables}. The table for the $\Z/2$ betti numbers was computed using Proposition \ref{prop:ss} below along with computations from \cite{ss-mod2}, and confirms the conjectural recursive formulae for $g\leqslant 6$. Proposition 1 computes the Leray-Serre spectral sequence for the fibration $N_g^\# \longrightarrow N_g$ in terms of the rank of multiplication by $\alpha$ on the ring $H^\ast(N_g;\Z/2)$, where $\alpha$ is the generator of $H^2(N_g;\Z/2)$. We mention that another consequence of the conjecture is the following identity between total ranks:

\vspace{.2cm}
\begin{equation}
    \dim_{\Z/2} H^\ast(N^\#_g;\Z/2) \; = \; 2\cdot \dim_{\Q} H^\ast(N^\#_g;\Q),\label{eq:double}
\end{equation}
\vspace{.2cm}

\noindent with the right side known to equal to $2g{2g\choose g}$. In fact, the verification of (\ref{eq:double}) would together with the inequalities of Theorem \ref{theorem:main} imply the conjectural recursive equalities.\\

\begin{figure}[t]
  \centering{
  \setlength{\extrarowheight}{-1pt}
  {\small{
  \begin{tabularx}{.42\textwidth}{ccccccc}
  \multicolumn{7}{l}{{\emph{$\Z/2$ Betti numbers of $N_g^\#$}}}\tabularnewline
\multicolumn{7}{l}{}\tabularnewline
$g \; =$   & $\phantom{2}$1$\phantom{2}$ & $\phantom{2}$2$\phantom{2}$ & $\phantom{2}$3$\phantom{2}$  & $\phantom{2}$4$\phantom{2}$  & $\phantom{2}$5$\phantom{2}$   & $\phantom{2}$6$\phantom{2}$   \tabularnewline \midrule
           & 1 & 1 & 1  & 1  & 1   & 1   \tabularnewline
           & \textcolor{purple}{1} & 0 & 0  & 0  & 0   & 0   \tabularnewline 
           &   & 1 & 1  & 1  & 1   & 1   \tabularnewline
           &   & \textcolor{purple}{5} & 6  & 8  & 10  & 12  \tabularnewline 
           &   & \textcolor{purple}{5} & 1  & 1  & 1   & 1   \tabularnewline 
           &   &   & \textcolor{purple}{7}  & 8  & 10  & 12  \tabularnewline
           &   &   & \textcolor{purple}{22} & 29 & 46  & 67  \tabularnewline
           &   &   & \textcolor{purple}{22} & \textcolor{purple}{9}  & 10  & 12  \tabularnewline 
           &   &   &    & \textcolor{purple}{37} & 46  & 67  \tabularnewline
           &   &   &    & \textcolor{purple}{93} & \textcolor{purple}{131} & 232 \tabularnewline 
           &   &   &    & \textcolor{purple}{93} & \textcolor{purple}{56}  & 67  \tabularnewline 
           &   &   &    &    & \textcolor{purple}{176} & \textcolor{purple}{233} \tabularnewline
           &   &   &    &    & \textcolor{purple}{386} & \textcolor{purple}{574} \tabularnewline
           &   &   &    &    & \textcolor{purple}{386} & \textcolor{purple}{299} \tabularnewline
           &   &   &    &    &     & \textcolor{purple}{794} \tabularnewline
           &   &   &    &    &     & \textcolor{purple}{1586}\tabularnewline
           &   &   &    &    &     & \textcolor{purple}{1586} \tabularnewline
\end{tabularx}\qquad\qquad
\begin{tabularx}{.42\textwidth}{ccccccc}
\multicolumn{7}{l}{{\emph{$\Q$ Betti numbers of $N_g^\#$}}}\tabularnewline
\multicolumn{7}{l}{}\tabularnewline
$g \; =$   & $\phantom{2}$1$\phantom{2}$ & $\phantom{2}$2$\phantom{2}$ & $\phantom{2}$3$\phantom{2}$  & $\phantom{2}$4$\phantom{2}$  & $\phantom{2}$5$\phantom{2}$   & $\phantom{2}$6$\phantom{2}$   \tabularnewline \midrule
           & 1 & 1 & 1  & 1  & 1   & 1   \tabularnewline
           & 0 & 0 & 0  & 0  & 0   & 0   \tabularnewline 
           &   & 1 & 1  & 1  & 1   & 1   \tabularnewline
           &   & 4 & 6  & 8  & 10  & 12  \tabularnewline 
           &   & 0 & 1  & 1  & 1   & 1   \tabularnewline 
           &   &   & 6  & 8  & 10  & 12  \tabularnewline
           &   &   & 15 & 29 & 46  & 67  \tabularnewline
           &   &   & 0  & 8  & 10  & 12  \tabularnewline 
           &   &   &    & 28 & 46  & 67  \tabularnewline
           &   &   &    & 56 & 130 & 232 \tabularnewline 
           &   &   &    & 0  & 45  & 67  \tabularnewline 
           &   &   &    &    & 120 & 232 \tabularnewline
           &   &   &    &    & 210 & 561 \tabularnewline
           &   &   &    &    & 0   & 220 \tabularnewline
           &   &   &    &    &     & 495 \tabularnewline
           &   &   &    &    &     & 792 \tabularnewline
           &   &   &    &    &     & 0   \tabularnewline
\end{tabularx}}}
\vspace{.3cm}
}
  \caption{{\small{Comparison of the $\Z/2$ and $\Q$ betti numbers of the framed moduli space. The $\Z/p$ betti numbers for $p$ prime, $p\neq 2$ are the same as the $\Q$ betti numbers. In each column half the betti numbers are listed; the rest are obtained by Poincar\'{e} duality. For example, the $\Z/2$ betti numbers of $N_2^\#$ are 1,0,1,5,5,5,5,1,0,1.}}}\label{fig:introtables}
\end{figure}

The proofs of (i) and (ii) and the inequalities in Theorem \ref{theorem:main} follow an adaptation of Newstead's Mayer-Vietoris argument. We also provide evidence for a stronger statement than the above conjecture, which may be accessible via geometric methods. The framed moduli space embeds into an extended moduli space $N_g^+$ which contains the singular locus $f_g^{-1}(+1)$. If it were the case that the maps on homology induced by inclusion, written in the sequel as

\vspace{.2cm}
\[
    \nu_r^g:H_r(N_g^\#;\Z/2) \longrightarrow H_r(N_g^+;\Z/2),
\]

\vspace{.35cm}

\noindent were always of {\emph{maximal rank}}, then our method would carry through to prove that equality holds in Theorem \ref{theorem:main}. More precisely, we suspect that $\nu_r^g$ is surjective for the first half of the $6g-6$ degrees, and injective for the latter half. We will show that $\nu_r^g$ is of maximal rank for all $r$ when $g\in\{1,2\}$, although we will only sketch our computations in the $g=2$ case. The manifold $N_g^+$ may be viewed as a real algebraic deformation of the singular locus $f_g^{-1}(+1)$ with generic fiber homeomorphic to $N_g^\#$, and understanding $\nu_r^g$ seems an interesting problem in itself.\\

If the conjectural recursive formulae hold, then $H^\ast(N_g^\#;\Z)$ is torsion-free in the first 1/3 and last 1/3 of its degrees, and has nontrivial 2-torsion in-between. We can say a bit more about this. It has been mentioned above that our conjectural formulae have been verified for $g\leqslant 6$ using the Leray-Serre spectral sequence and the computations of \cite{ss-mod2}. In that paper, we study the cohomology ring $H^\ast(N_g;\Z/2)$, and a featured result is that the nilpotency degree of $\alpha\in H^2(N_g;\Z/2)$ is equal to $g$. This latter point is related to the current work as follows. Consider the Bockstein homomorphism associated to the short exact coefficient sequence $\Z\to \Z\to \Z/2$, written

\vspace{.3cm}

\[
    \beta:H^r(N^\#_g;\Z/2) \longrightarrow H^{r+1}(N^\#_g;\Z).
\]

\vspace{.35cm}

\noindent Using a straightforward induction argument, the conjectural formulae imply that the $\Z/2$ betti numbers and $\Q$ betti numbers of the framed moduli space agree up to degree $r=2g-2$. Thus we expect that $\beta=0$ in degrees $r\leqslant 2g-2$. Let $y\in H^1(SO(3);\Z/2)$ be a generator. Then $\alpha^{g-1}\otimes y$ is an element in the $E_2$ page of the Leray-Serre spectral sequence for the fibration $N_g^\#$. By Proposition \ref{prop:ss} below and the nilpotency $\alpha^g=0$ from \cite[Thm. 1]{ss-mod2}, it survives to the $E_\infty$ page to define a non-zero element $[\alpha^{g-1}\otimes y]\in H^{2g-1}(N_g^\#;\Z/2)$. This element has no integral lift since $y$ has no integral lift, and thus we obtain the following.

\vspace{.4cm}

\begin{corollary}\label{cor:bockstein}
$\beta\left([\alpha^{g-1}\otimes y]\right)\neq 0$.
\end{corollary}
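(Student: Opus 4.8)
The plan is to convert the statement into a question about integral lifts and then compare the integral and mod $2$ Leray--Serre spectral sequences of the fibration $\pi\colon N_g^\#\to N_g$. By exactness of the long exact sequence attached to $\Z\xrightarrow{2}\Z\to\Z/2$, the Bockstein $\beta(x)$ is nonzero precisely when $x$ is not the reduction of an integral class; so it suffices to prove that $[\alpha^{g-1}\otimes y]$ admits no integral lift. Write $\rho$ for reduction mod $2$ and recall that $H^\ast(SO(3);\Z/2)=\Z/2[y]/(y^4)$, while $H^\ast(SO(3);\Z)$ equals $\Z,0,\Z/2,\Z$ in degrees $0,1,2,3$; in particular $H^1(SO(3);\Z)=0$. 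Since $SO(3)$ is connected the fiber local systems are trivial, and since $H^\ast(N_g;\Z)$ is torsion free (Atiyah--Bott) the integral page $\bar E$ splits as $\bar E_2^{p,q}=H^p(N_g;\Z)\otimes H^q(SO(3);\Z)$. The decisive point is that the $q=1$ row of $\bar E$ vanishes identically, whence $\bar E_\infty^{p,1}=0$ for all $p$.

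On the mod $2$ page $E$, the class $\alpha^{g-1}\otimes y$ lies in $E_2^{2g-2,1}$ and, by the discussion preceding the corollary, represents a nonzero element of $E_\infty^{2g-2,1}$; thus $[\alpha^{g-1}\otimes y]$ has filtration exactly $2g-2$, with leading term on the $q=1$ line, so it lies in $F^{2g-2}\setminus F^{2g-1}$. Coefficient reduction induces a map of spectral sequences $\rho_\ast\colon\bar E\to E$ compatible with the filtrations. Row by row at $E_2$ it is: reduction $H^p(N_g;\Z)\to H^p(N_g;\Z/2)$ on $q\in\{0,3\}$; an isomorphism on $q=2$ (both rows being $H^p(N_g;\Z/2)$, via $y^2$); and the zero map on $q=1$.

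Now suppose $c\in H^{2g-1}(N_g^\#;\Z)$ satisfies $\rho(c)=[\alpha^{g-1}\otimes y]$, and let $p_0$ be its filtration level, with nonzero leading term $\mathrm{lt}(c)\in\bar E_\infty^{p_0,q_0}$, $q_0=2g-1-p_0$. If $q_0=0$ then $c\in\bar F^{2g-1}$ forces $\rho(c)\in F^{2g-1}$, against $[\alpha^{g-1}\otimes y]\notin F^{2g-1}$; if $q_0=1$ then $\mathrm{lt}(c)\in\bar E_\infty^{2g-2,1}=0$, impossible. In the remaining cases $p_0<2g-2$, so a nonzero $\rho_\ast(\mathrm{lt}(c))$ would give $\rho(c)$ filtration $p_0<2g-2$, a contradiction; hence we may assume $\rho_\ast(\mathrm{lt}(c))=0$. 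For $q_0=2$ this is impossible because $\rho_\ast$ is injective on $\bar E_\infty^{2g-3,2}$ (justified below). For $q_0=3$ the reduction kernel on the $q=3$ row is $2H^\ast(N_g;\Z)$, so $\mathrm{lt}(c)=2z$; replacing $c$ by $c-2\tilde z$ for any integral representative $\tilde z$ of $z$ strictly raises the filtration and preserves $\rho(c)$, since $2\cdot(\text{mod }2)=0$, and iterating reduces to the already-settled cases $q_0\le 2$. Every case is excluded, so no integral lift exists and $\beta([\alpha^{g-1}\otimes y])\neq0$.

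The injectivity of $\rho_\ast$ along the $q=2$ line of $E_\infty$ --- not merely of $E_2$ --- is the technical heart, and the reconciliation of filtrations is the main obstacle: reduction mod $2$ may raise filtration, so the content is not the immediate vanishing of the integral $q=1$ row but the verification that an integral lift's leading term cannot hide on the $q=2$ or $q=3$ line and die under $\rho$. The $q=2$ line supports no outgoing differential in either sequence, the only candidate being the transgression $\tau(y^2)=\phi^\ast w_3$, where $\phi\colon N_g\to BSO(3)$ classifies the bundle; here $\phi^\ast w_3=Sq^1\phi^\ast w_2=Sq^1\alpha=0$, using $\phi^\ast w_2=d_2 y=\alpha$ and that $\alpha$ lifts integrally. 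The $q=2$ line receives only the $d_2$ from the $q=3$ line, and since $\rho_\ast$ is an isomorphism on $q=2$ and surjective on $q=3$ at $E_2$, a diagram chase shows that injectivity descends to the cokernels computing $E_\infty^{\bullet,2}$. This last step, resting on $Sq^1\alpha=0$, is the one genuinely calculational input.
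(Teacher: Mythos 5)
Your overall strategy---nonzero Bockstein $\Leftrightarrow$ no integral lift, then a filtration-by-filtration comparison of the integral and mod 2 Leray--Serre spectral sequences, with the decisive vanishing of the integral $q=1$ row---is precisely a rigorous expansion of the paper's one-sentence justification (``no integral lift since $y$ has no integral lift''), and your cases $q_0\in\{0,1,2\}$ are sound. In particular, the injectivity of $\rho_\ast$ on the $q=2$ line at $E_\infty$ is correctly reduced to the vanishing of the integral transgression (which holds since $\alpha$ lifts integrally, or simply because $H^3(N_g;\Z)$ is torsion-free, so it cannot contain the $2$-torsion class $W_3=\beta\alpha$) together with a diagram chase on cokernels of $d_2$.

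The genuine gap is in the case $q_0=3$. From $\rho_\ast(\mathrm{lt}(c))=0$ you write $\mathrm{lt}(c)=2z$ with $z\in H^{2g-4}(N_g;\Z)$, but to ``replace $c$ by $c-2\tilde z$'' you need $z$ itself to define an element of $\bar E_\infty^{2g-4,3}$, i.e.\ to be a permanent cycle; this does not follow from $2z$ being one. First, $\bar d_2(2z)=0$ is vacuous, because the target $\bar E_2^{2g-2,2}\cong H^{2g-2}(N_g;\Z/2)$ is a $\Z/2$-vector space, so $\bar d_2(z)=\alpha\bar z$ may be nonzero (already for $g=2$ and $z=1\in H^0$). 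Second, unlike the mod 2 sequence, the integral sequence does \emph{not} collapse at $E_3$: it carries a nonzero $d_4$ on the $q=3$ row. This is forced rationally: for $g=2$ the rational $E_2$-page has total dimension $2\cdot 8=16$, while $\dim_\Q H^\ast(N_2^\#;\Q)=12$, so the rational $d_4$ (multiplication by the transgression of the fiber fundamental class, essentially the Pontryagin class of the bundle) has rank $2$, and hence $\bar d_4\neq 0$. Thus ``$z$ has an integral representative $\tilde z$'' requires both $\alpha\bar z=0$ and $\bar d_4 z=0$, neither of which you verify, and the kernel of $\rho_\ast$ on $\bar E_\infty^{p,3}$ need not equal $2\bar E_\infty^{p,3}$; so ``iterating reduces to the already-settled cases'' is unjustified. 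Since $q_0=3$ is exactly where an integral lift could hide, this case is the real crux (more so than the $q=2$ injectivity you flag as the technical heart). Ironically, the nonvanishing of $\bar d_4$ works in your favor---it kills much of the $q=3$ row, e.g.\ for $g=2$ it gives $\bar E_\infty^{0,3}=0$ outright---but your argument as written never invokes it.
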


\vspace{.4cm}

\noindent From the discussion above, we expect this to account for the first difference between the $\Z/2$ betti numbers and $\Q$ betti numbers, which occurs at $r=2g-1$. In fact, the conjectural formulae imply that the $\Z/2$ betti number at $r=2g-1$ is always exactly one more than the $\Q$ betti number, and thus we expect that the element $[\alpha^{g-1}\otimes y]$ entirely accounts for this difference.\\

Finally, we make a few remarks on other approaches to proving equality in \ref{eq:rec1}$-$\ref{eq:rec3}. One might try to apply Thaddeus's Morse-theoretic argument of \cite{thaddeus-morse} to the framed moduli space. Indeed, a priori, the function $(A_i,B_i)\longmapsto \text{tr}(A_g)$ defined on $N_g^\#$, the pullback of Thaddeus's function, may be perfect Morse-Bott over $\Z/2$. This is not the case, however: for genus 2, the betti numbers for the starting page of the Bott-Morse spectral sequence with $\Z/2$ coefficients are $1,1,2,6,6,6,6,2,1,1$, while the $\Z/2$ betti numbers of $N_2^\#$, which constitute the $E_\infty$ page, are $1,0,1,5,5,5,5,1,0,1$. The gaps between these pages increases as the genus grows. On a related note, it would be interesting to see if the $\infty$-dimensional method of Atiyah-Bott \cite{ab} has anything to say here.\\

\vspace{5cm}

\noindent \textbf{Outline.} In Section \ref{sec:preliminaries} we fix our notation and record some useful results from \cite{newstead-mv}. In Section \ref{sec:genus1} we compute some data in the genus 1 case in order to apply Newstead's Mayer-Vietoris argument in Section \ref{sec:mainarg} to prove Theorem \ref{theorem:main}. Finally, in Section \ref{sec:genus2} we sketch the arguments that show $\nu_r^g$ is of maximal rank for genus $2$.\\

\vspace{.7cm}

\noindent \textbf{Acknowledgments.} The first author thanks Simon Donaldson and Ali Daemi for encouraging conversations. The first author was supported by NSF grant DMS-1503100.

\newpage

\section{Preliminaries}\label{sec:preliminaries}

In this section we list some facts from Newstead's paper \cite{newstead-mv} and fix notation and conventions. All homology groups will be with $\F=\Z/2$ coefficients unless otherwise indicated, and we write $|V|$ for the dimension of a vector space $V$. Although we henceforth fix our coefficient field $\F$, it is worth remarking that the results of this section hold for any coefficient field.\\

Write $SU(2) = D_+ \cup D_-$ as a union of two 3-balls, each with boundary the 2-sphere of trace-free elements, and with $\pm 1\in D_\pm$. Then define the $6g$-dimensional manifolds with boundary

\vspace{.2cm}
\[
	N_g^{\pm} \; = \; f_g^{-1}(D_\pm).
\]
\vspace{.2cm}

\noindent Newstead explains that $N_g^-$ is homeomorphic to $D_-\times N_g^\#$. In particular, the boundaries of both $N_g^+$ and $N_g^-$ may be identified with $S^2\times N_g^\#$. Define the betti numbers

\vspace{.3cm}
\[
	\check{n}^g_r \; = \; |H_r(N_g^+)|, \qquad\qquad  \hat{n}^g_r \; = \; |H_r(N_g^+,\partial N_g^+)|.
\]
\vspace{.3cm}

\noindent Note that $\check{n}_{6g-r}^g = \hat{n}^g_r$ by Lefschetz duality. Let $\mu_r^g:H_r(\partial N_g^+) \longrightarrow H_r(N_g^+)$ be the map on homology induced by inclusion. Using the K\"{u}nneth decomposition for the homology of the boundary of $N_g^+$, we can write $\mu_r^g$ as the sum of two maps, $\nu_r^g$ and $\rho_r^g$:

\vspace{.25cm}
\begin{center}
\begin{tikzcd}[column sep=large] 
    H_0(S^2)\otimes H_r(N_g^\#)  \arrow[to=1-2, "\nu_r^g"] & H_r(N_g^+)\\
    H_2(S^2)\otimes H_{r-2}(N_g^\#) \arrow[to=1-2, "\rho_r^g"', start anchor=east] & 
\end{tikzcd}
\end{center}
\vspace{.25cm}

\noindent Note that the domains of $\nu_r^g$ and $\rho_r^g$ are naturally isomorphic to $H_r(N_g^\#)$ and $H_{r-2}(N_g^\#)$, respectively. These two maps play a central role in the sequel.
Write $m_r^g$ for the betti numbers of $SU(2)^{2g}$. These were given in the introduction as the coefficients of $(1+t^3)^{2g}$. They are explicitly given by:

\vspace{.2cm}
\[
    m_r^g \; = \; \dim H_r(SU(2)^{2g}) \; =\; \begin{cases} \displaystyle{{2g\choose r/3}}, & r\equiv 0\, (\text{mod } 3)\\ & \\ \quad 0\;, & \text{otherwise} \end{cases}
\]
\vspace{.2cm}

\noindent We now list some elementary relations between the quantities thus far introduced. To start, the following says that the betti numbers of $N_g^+$ determine those of $N_g^\#$ and conversely:

\vspace{.56cm}
\begin{lemma}\label{lemma:n}
\[
	\check{n}_r^g \; = \; \begin{cases} h_{r-2}^g + m_r^g &  (r\leqslant 3g+1)\\  h_{r-2}^g - m_{r+1}^g  &  (r \geqslant 3g+1)\end{cases} \qquad \qquad \hat{n}_r^g \; = \; \begin{cases} h_{r-1}^g - m_{r-1}^g &  (r\leqslant 3g-1)\\  h_{r-1}^g + m_{r}^g &  (r\geqslant 3g-1)\end{cases}
\]
\end{lemma}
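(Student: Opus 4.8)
The plan is to reduce both formulae to a single computation of the rank of the inclusion-induced map $j_\ast:H_r(N_g^\#)\to H_r(SU(2)^{2g})$, and then to extract the two cases by bookkeeping with the betti numbers $m_r^g$. Since Lefschetz duality already gives $\check{n}^g_{6g-r}=\hat{n}^g_r$, and Poincaré duality gives $h^g_s=h^g_{6g-3-s}$ and $m^g_s=m^g_{6g-s}$, it suffices to prove the formula for $\hat{n}^g_r$: substituting $r\mapsto 6g-r$ and re-indexing turns $h^g_{r-1}\mp m^g_{r-1}$ into $h^g_{6g-r-1}\mp m^g_{6g-r-1}=h^g_{r-2}\mp m^g_{r+1}$, which is exactly the $\check{n}^g_r$ formula with the ranges correctly swapped. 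So I will concentrate on $\hat{n}^g_r$.

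First I would identify $H_r(N_g^+,\partial N_g^+)$ with $H_r(SU(2)^{2g},N_g^-)$ by excision, using that $SU(2)^{2g}=N_g^+\cup N_g^-$ with $N_g^+\cap N_g^-=\partial N_g^+$; the two pieces are codimension-zero submanifolds-with-boundary glued along their common boundary, so collar neighborhoods make them an excisive couple. Thus $\hat{n}^g_r=\dim H_r(SU(2)^{2g},N_g^-)$. Next, Newstead's homeomorphism $N_g^-\cong D_-\times N_g^\#$ exhibits $N_g^\#$ as a deformation retract of $N_g^-$, so the inclusion $N_g^-\hookrightarrow SU(2)^{2g}$ is homotopic to $j:N_g^\#\hookrightarrow SU(2)^{2g}$. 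Writing $s_r=\mathrm{rank}\,(j_{\ast}:H_r(N_g^\#)\to H_r(SU(2)^{2g}))$ and running the long exact sequence of the pair $(SU(2)^{2g},N_g^-)$, I obtain
\[
\hat{n}^g_r=\dim\mathrm{coker}(j_{\ast,r})+\dim\ker(j_{\ast,r-1})=(m^g_r-s_r)+(h^g_{r-1}-s_{r-1}).
\]

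The crux, and the step I expect to be the main obstacle, is the determination of $s_r$. Since $H_\ast(SU(2)^{2g})$ is concentrated in degrees divisible by $3$, one has $s_r=0$ automatically unless $3\mid r$. The key claim is that $j_\ast$ is surjective in degrees $r\leqslant 3g-1$ (so $s_r=m^g_r$ there) and vanishes in degrees $r\geqslant 3g$ (so $s_r=0$). Granting this, the two cases drop out: for $r\leqslant 3g-1$ both $s_r=m^g_r$ and $s_{r-1}=m^g_{r-1}$, giving $\hat{n}^g_r=h^g_{r-1}-m^g_{r-1}$; for $r\geqslant 3g$ both $s_r$ and $s_{r-1}$ vanish, giving $\hat{n}^g_r=h^g_{r-1}+m^g_r$, and at the overlap $r=3g-1$ the two expressions agree because $m^g_{3g-2}=m^g_{3g-1}=0$. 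This matches the stated ranges exactly.

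To establish the key claim I would pass to cohomology and study the restriction $j^\ast:H^\ast(SU(2)^{2g})=\Lambda_{\F}[\psi_1,\dots,\psi_{2g}]\to H^\ast(N_g^\#)$, where the $\psi_i$ are the degree-three classes pulled back from the $SU(2)$-factors; then $s_r$ is the rank of $j^\ast$ in degree $r$. Vanishing in high degrees should follow from relations among the restricted generators — already the pullback under $f_g$ of the generator of $H^3(SU(2))$ restricts to zero on $N_g^\#=f_g^{-1}(-1)$ — while injectivity in low degrees says that products of at most $g-1$ of the $\psi_i$ remain independent. This is precisely where genuine input about the moduli space enters, and it is tied to the known structure of $H^\ast(N_g^\#)$; in the Mayer–Vietoris formulation for $SU(2)^{2g}=N_g^+\cup N_g^-$ that Newstead uses, the same input resurfaces as the injectivity of the maps $\rho_r^g$ in the corresponding range of degrees. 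I expect this rank computation, rather than any of the surrounding homological algebra, to be the substantive point; everything else is the duality bookkeeping verified above.
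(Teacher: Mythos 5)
Your homological skeleton---excision, the long exact sequence of the pair $(SU(2)^{2g},N_g^-)$, and the duality bookkeeping reducing the $\check{n}^g_r$ formula to the $\hat{n}^g_r$ formula---is sound, and it is essentially the paper's own reduction (the paper runs the pairs $(SU(2)^{2g},N_g^+)$ and $(SU(2)^{2g},N_g^-)$ in the low range and fills in the high range by Lefschetz duality; note also a harmless slip in your compressed transformation: the plus case of the $\hat{n}$ formula carries $m^g_r$, not $m^g_{r-1}$, and should land on $h^g_{r-2}+m^g_r$, as your later case-by-case computation in fact does). The genuine gap is that your ``key claim''---$s_r=m^g_r$ for $r\leqslant 3g-1$ and $s_r=0$ for $r\geqslant 3g$---is exactly where all the content of the lemma sits, and you leave it unproven. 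The paper does not: it quotes Lemmas 2 and 3 of Section 7 of \cite{newstead-mv}, which show by elementary, coefficient-independent arguments that $H_r(N_g^+)\to H_r(SU(2)^{2g})$ is surjective for $r\leqslant 3g+2$ and that $H_r(N_g^\#)\to H_r(SU(2)^{2g})$ is surjective for $r\leqslant 3g-1$. Your low-degree half is the second statement verbatim. Your high-degree half is not a freebie either: over $\F$, vanishing of $j_\ast$ in degree $r$ is equivalent---transposing and applying Poincar\'{e} duality, under which $j^\ast$ becomes the umkehr map $H_s(SU(2)^{2g})\to H_s(SU(2)^{2g},N_g^+)\cong H_{s-3}(N_g^\#)$ with $s=6g-r$---to surjectivity of $H_s(N_g^+)\to H_s(SU(2)^{2g})$ for $s\leqslant 3g$, i.e.\ precisely to the first of Newstead's two lemmas. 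So your repackaging still requires both geometric inputs; it has merely hidden one of them inside the high-degree vanishing.

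Moreover, the route you sketch for establishing the key claim would not close this gap. The single relation coming from $f_g^\ast$ of the generator of $H^3(SU(2))$ restricting to zero is far from showing that $H^r(SU(2)^{2g})\to H^r(N_g^\#)$ vanishes for all $r\geqslant 3g$: that requires every product of $g$ distinct classes $\psi_i$ to restrict to zero. Appealing to ``the known structure of $H^\ast(N_g^\#)$'' is circular in this paper, because $H^\ast(N_g^\#;\Z/2)$ is precisely the unknown that Lemma \ref{lemma:n}, fed into the Mayer--Vietoris machine, is being used to compute; and invoking the injectivity of the maps $\rho_r^g$ is circular for the same reason, since Lemma \ref{lemma:rho} is itself derived from Lemma \ref{lemma:n} together with the Mayer--Vietoris relation. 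To complete your argument you should either cite Newstead's two surjectivity lemmas (verifying, as the paper remarks, that his proofs work over any coefficient ring) or reproduce those elementary proofs; everything else in your write-up then goes through.
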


\vspace{.6cm}

\noindent This lemma follows from Lemmas 2 and 3 in Section 7 of \cite{newstead-mv}. There, Newstead shows that the two maps $H_r(N_g^+)\longrightarrow H_r(SU(2)^{2g})$ and $H_r(N_g^\#)\longrightarrow H_r(SU(2)^{2g})$ induced by inclusion are surjective for $r\leqslant 3g+2$ and $r\leqslant 3g-1$, respectively. His arguments for surjectivity are elementary and easily seen to hold for any coefficient ring. The formula for $\check{n}_r^g$ with $r\leqslant 3g+1$ then follows by looking at the long exact sequence associated to the pair $(SU(2)^{2g},N_g^+)$ and observing that excision identifies the group $H_r(SU(2)^{2g},N_g^+)$ with $H_{r-2}(N_g^\#)$. The formula for $\hat{n}_r^g$ with $r\leqslant 3g-1$ follows in a similar way, and the rest of the formulae follow by Lefschetz duality.\\

Next, we mention that the kernels and cokernels of the maps $\rho_r^g$ and $\mu_r^g$ are also determined by the betti numbers of $N_g^\#$. From the long exact sequence of the pair $(N_g^+,\partial N_g^+)$ we have

\vspace{.2cm}
\[
	|\text{coker}(\mu_r^g)| + |\text{ker}(\mu_{r-1}^g)| \; = \; |H_r(N_g^+,\partial N_g^+)| \; = \; \hat{n}_r^g,
\]
\vspace{.2cm}

\noindent from which the following is easily computed, with help of the above lemma:

\vspace{.2cm}
\[
	|\text{ker}(\mu_r^g)| \; = \; 
	\begin{cases} h_r^g - m_r^g &  (r < 3g)\\  
	           h_{r}^g + m_{r+1}^g  &  (r \geqslant 3g)
	\end{cases}
\]
\vspace{.2cm}

\noindent And for the map $\rho_r^g$ we may consider the Mayer-Vietoris sequence associated to the decomposition of $SU(2)^{2g}$ into the union of $N_g^+$ and $N_g^-$ along their boundaries:

\vspace{.2cm}
\begin{center}
\begin{tikzcd}[column sep=large] 
    \cdots \;\;\; H_r(S^2\times N_g^\#)  \arrow[to=1-2, "\chi^g_r"] & H_r(N_g^+)\oplus H_r(N_g^-) \arrow[to=1-3] & H_r(SU(2)^{2g}) \;\;\;\cdots
\end{tikzcd}
\end{center}
\vspace{.2cm}

\noindent The fact that $N_g^-$ is homeomorphic to $D^3\times N_g^\#$ implies that the kernel and cokernel of $\chi^g_r$ are isomorphic to the kernel and cokernel of $\rho^g_r$, respectively. From this we have

\vspace{.2cm}
\[
	|\text{coker}(\rho_r^g)| + |\text{ker}(\rho_{r-1}^g)| \; = \; |H_r(SU(2)^{2g})| \; = \; m_r^g.
\]
\vspace{.2cm}

\noindent Solving for the kernel and cokernel of $\rho_r^g$ amounts to the following very useful observation:

\vspace{.5cm}
\begin{lemma}\label{lemma:rho}
{$\phantom{3}$}\\
\vspace{-.3cm}
\begin{enumerate}
    \item If $r\leqslant 3g + 1$, then $\rho_r^g$ is injective, and its cokernel has dimension $m_r^g$. In particular, if also $r \equiv 1,2\, (\text{{\emph{mod}} }3)$, then $\rho_r^g$ is an isomorphism.\\
    \item If $r\geqslant 3g + 1$, then $\rho_r^g$ is surjective, and its kernel has dimension $m_{r+1}^g$. In particular, if also $r \equiv 0,1\, (\text{{\emph{mod}} }3)$, then $\rho_r^g$ is an isomorphism.
\end{enumerate}
\end{lemma}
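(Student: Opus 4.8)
The plan is to combine two short linear relations among the kernel and cokernel dimensions of the maps $\rho_r^g$ and to read off everything from nonnegativity of dimensions. For the fixed genus $g$, abbreviate $k_r = |\text{ker}(\rho_r^g)|$ and $c_r = |\text{coker}(\rho_r^g)|$. The first relation is exactly the one displayed just above the statement, coming from the Mayer--Vietoris sequence for $SU(2)^{2g} = N_g^+\cup N_g^-$ together with the homeomorphism $N_g^-\cong D^3\times N_g^\#$, namely $c_r + k_{r-1} = m_r^g$, valid for every $r$. The second relation I would extract directly from $\rho_r^g$ by rank--nullity: its domain $H_2(S^2)\otimes H_{r-2}(N_g^\#)$ has dimension $h_{r-2}^g$ and its codomain $H_r(N_g^+)$ has dimension $\check{n}_r^g$, so equating the two expressions for $\text{rank}(\rho_r^g)$ gives $c_r - k_r = \check{n}_r^g - h_{r-2}^g$. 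Feeding in Lemma \ref{lemma:n} then yields $c_r - k_r = m_r^g$ when $r\leqslant 3g+1$ and $c_r - k_r = -m_{r+1}^g$ when $r\geqslant 3g+1$.

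For part (1), fix $r\leqslant 3g+1$. Subtracting the second relation from the first cancels $c_r$ and the common term $m_r^g$, leaving $k_{r-1}+k_r = 0$. Since both summands are dimensions of vector spaces, hence nonnegative, this forces $k_r = 0$, so $\rho_r^g$ is injective; the second relation then gives $c_r = m_r^g$. The addendum is immediate: if moreover $r\equiv 1,2\,(\text{mod }3)$ then $m_r^g = 0$, the cokernel vanishes, and $\rho_r^g$ is an isomorphism.

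For part (2), fix $r\geqslant 3g+1$. Here I would instead invoke the first relation at index $r+1$, namely $c_{r+1}+k_r = m_{r+1}^g$, and add it to the second relation $c_r - k_r = -m_{r+1}^g$; this cancels $k_r$ and $m_{r+1}^g$, leaving $c_r + c_{r+1} = 0$, whence $c_r = 0$ and $\rho_r^g$ is surjective. The second relation then gives $k_r = m_{r+1}^g$, and if $r\equiv 0,1\,(\text{mod }3)$ then $r+1\equiv 1,2\,(\text{mod }3)$, so $m_{r+1}^g = 0$, the kernel vanishes, and $\rho_r^g$ is again an isomorphism. Part (2) can alternatively be deduced from part (1) by Lefschetz duality, but the direct computation is symmetric and equally short.

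The argument is essentially formal once both relations are assembled, so I do not expect a serious obstacle; the only points needing care are the bookkeeping at the overlap $r = 3g+1$, where $m_{3g+1}^g = m_{3g+2}^g = 0$ so the two branches of Lemma \ref{lemma:n} agree and both conclusions remain consistent, and the repeated appeal to nonnegativity of dimensions to split a vanishing sum of two nonnegative quantities into two separate vanishings.
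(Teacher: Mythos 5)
Your proof is correct and is essentially the paper's own argument: the paper derives exactly your first relation $|\text{coker}(\rho_r^g)|+|\text{ker}(\rho_{r-1}^g)|=m_r^g$ from the Mayer--Vietoris sequence for $SU(2)^{2g}=N_g^+\cup N_g^-$ and then states that ``solving'' for the kernel and cokernel gives the lemma, the solving being precisely your combination of rank--nullity, Lemma \ref{lemma:n}, and nonnegativity of dimensions. Your explicit check of consistency at the overlap $r=3g+1$ is a nice touch that the paper leaves implicit.
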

\vspace{.5cm}

\noindent We do not have as easy a way to compute the kernels and cokernels of the maps $\nu_r^g$ in general. We will determine these quantities for low genus examples.

\vspace{.6cm}

\newpage

\section{Getting started with the genus 1 decomposition}\label{sec:genus1}

Now we begin the adaptation of Newstead's Mayer-Vietoris argument with coefficients in $\F$. It is from this point onwards that the situation differs from the case of a field that has characteristic not equal to 2. We begin by decomposing, as does Newstead, the genus $g+1$ framed moduli space into two parts that are built from genus $1$ and genus $g$ data:

\vspace{.2cm}
\begin{equation}\label{eq:mv1}
	 N_{g+1}^\# \; = \;  N_1^+ \times N_g^\# \bigcup_{S^2\times N_1^\# \times N_g^\#} N_1^\# \times N_g^+
\end{equation}
\vspace{.2cm}

\noindent We refer to \cite[\S 4]{newstead-mv} for details. Recall here that $N_1^\#$ may be identified with $SO(3)$, with betti numbers $1,1,1,1$, and from Lemma \ref{lemma:n}, those of $N_1^+$ are $1,0,1,3,1$. We can then fill in most of the data for the maps we considered in the previous section with $g=1$ in the following table: 

\vspace{.3cm}

\begin{figure}[h]
  \centering{
  \setlength{\extrarowheight}{7pt}
\begin{tabularx}{0.65\textwidth}{c |*{6}{Y}}
$r$        & $h_r^1$ & $\check{n}_r^1$ & $\mu_{r}^1$ & $ \rho_{r}^1$ & $\nu_{r}^1$ \tabularnewline \midrule
0           & 1 & 1 & $1_1^1$ & $0_0^1$ &  $1_1^1$\tabularnewline
1           & 1 & 0 & $0_1^0$ & $0_0^0$ &  $0_1^0$ \tabularnewline 
2           & 1 & 1 & $1_2^1$ & $1_1^1$ &   \fbox{$1_1^1$}\tabularnewline
3           & 1 & 3 & $1_2^3$ & $1_1^3$ &   \fbox{$1_1^3$}\tabularnewline 
4           & 0 & 1 & $1_1^1$ & $1_1^1$ & $0_0^1$\tabularnewline 
5           & 0 & 1 & $0_1^0$ & $0_1^0$ & $0_0^0$\tabularnewline 
\end{tabularx}
\vspace{.3cm}
}
  \caption{{\small{Genus 1 data. The notation $a_b^c$ stands for a linear map $\F^b\longrightarrow \F^c$ of rank $a$.  All entries are computed from the first column from relations in Section \ref{sec:preliminaries}, except for $\nu_2^1$ and $\nu_3^1$ (boxed) -- see Lemma \ref{lemma:nu1}.}}}
\end{figure}

\vspace{.3cm}

\noindent In fact, all of this data (not including $\nu_2^1$ and $\nu_3^1)$ can be deduced from Newstead's table \cite[\S 5]{newstead-mv} via universal coefficients. Now consider the Mayer-Vietoris sequence corresponding to (\ref{eq:mv1}):

\vspace{.2cm}
\begin{center}
\begin{tikzcd}[column sep=large] 
    \cdots \;\;\; H_r(S^2\times N_1^\#\times N_g^\#)  \arrow[to=1-2, "\lambda^{1,g}_r"] & H_r(N_1^+\times N_g^\#)\oplus H_r(N_1^\#\times N_g^+) \arrow[to=1-3] & H_r(N_{g+1}^\#) \;\;\;\cdots
\end{tikzcd}
\end{center}
\vspace{.2cm}

\noindent Then the exactness of the Mayer-Vietoris sequence yields the following:

\vspace{.2cm}
\begin{equation}
	h_r^{g+1} \; = \; |\text{coker}(\lambda_r^{1,g})| + |\text{ker}(\lambda_{r-1}^{1,g})|\label{eq:lambda}
\end{equation}
\vspace{.2cm}

\noindent To understand $\lambda_r^{1,g}$ we decompose all of the homology groups using the K\"{u}nneth Theorem. Before doing this, let us write the two components of $\lambda_r^{1,g}$ as maps in two different directions: \\

\begin{center}
\begin{tikzcd}[column sep=large] 
H_r(N_1^+\times N_g^\#) &  H_r(S^2\times N_1^\#\times N_g^\#) \arrow[to=1-1, red]  \arrow[to=1-3, blue] & H_r(N_1^\#\times N_g^+) \\
\end{tikzcd}
\end{center}

\noindent Write $\iota^g_r$ for the identity map on $H_r(N_g^\#)$. From here we expand the map $\lambda_r^{1,g}$ using the K\"{u}nneth decompositions of the three homology groups:\\

\vspace{.15cm}

\begin{center}
\begin{tikzcd}[column sep=large] 
H_0(N_1^+)\otimes H_r(N_g^\#) & H_0(S^2)\otimes H_0(N_1^\#)\otimes H_r(N_g^\#) \arrow[to=1-1, red, "\nu^1_0\,\otimes\, \iota^g_r"' black]  \arrow[to=1-3, blue, "\iota_0^1\,\otimes\,\nu^g_r" black] & H_0(N_1^\#)\otimes H_r(N_g^+) \\
H_2(N_1^+)\otimes H_{r-2}(N_g^\#) & H_2(S^2)\otimes H_0(N_1^\#)\otimes H_{r-2}(N_g^\#) \arrow[to=2-1, red, "\rho^1_2\,\otimes\, \iota^g_{r-2}"' black]  \arrow[to=1-3, blue, "\iota_0^1\,\otimes\,\rho^g_r"' black,start anchor=north east, end anchor=south west] & \\

	& H_0(S^2)\otimes H_1(N_1^\#)\otimes H_{r-1}(N_g^\#)  \arrow[to=3-3, blue, "\iota_1^1\,\otimes\,\nu^g_{r-1}" black] & H_1(N_1^\#)\otimes H_{r-1}(N_g^+) \\
H_3(N_1^+)\otimes H_{r-3}(N_g^\#) & H_2(S^2)\otimes H_1(N_1^\#)\otimes H_{r-3}(N_g^\#) \arrow[to=4-1, red, "\rho^1_3\,\otimes\, \iota^g_{r-3}"' black]  \arrow[to=3-3, blue, "\iota_1^1\,\otimes\,\rho^g_{r-1}"' black,start anchor=north east, end anchor=south west] & \\

	& H_0(S^2)\otimes H_2(N_1^\#)\otimes H_{r-2}(N_g^\#)  \arrow[to=2-1, red, "\nu^1_2\,\otimes\, \iota^g_{r-2}" black, near end, start anchor=north west, end anchor=south east] \arrow[to=5-3, blue, "\iota_2^1\,\otimes\,\nu^g_{r-2}" black] & H_2(N_1^\#)\otimes H_{r-2}(N_g^+) \\
H_4(N_1^+)\otimes H_{r-4}(N_g^\#) & H_2(S^2)\otimes H_2(N_1^\#)\otimes H_{r-4}(N_g^\#) \arrow[to=6-1, red, "\rho^1_4\,\otimes\, \iota^g_{r-4}"' black]  \arrow[to=5-3, blue, "\iota_2^1\,\otimes\,\rho^g_{r-2}"' black,start anchor=north east, end anchor=south west] &\\

	& H_0(S^2)\otimes H_3(N_1^\#)\otimes H_{r-3}(N_g^\#)  \arrow[to=4-1, red, "\nu^1_3\,\otimes\, \iota^g_{r-3}" black, near end, start anchor=north west, end anchor=south east] \arrow[to=7-3, blue, "\iota_3^1\,\otimes\,\nu^g_{r-3}" black] & H_3(N_1^\#)\otimes H_{r-3}(N_g^+) \\
	& H_2(S^2)\otimes H_3(N_1^\#)\otimes H_{r-5}(N_g^\#) \arrow[to=7-3, blue, "\iota_3^1\,\otimes\,\rho^g_{r-3}"' black,start anchor=north east, end anchor=south west] &\\
\end{tikzcd}
\end{center}

\vspace{.15cm}

\noindent Note that each homology group of $S^2$, $N_1^\#$ and $N_1^+$ that appears here is isomorphic to $\F$, with the exception of $H_3(N_1^+)$, which is rank 3. In the sequel, it will be convenient to replace each vector space that appears in such a diagram by a dot $\bullet$ as in Figure \ref{fig:dots2}.

Now, if we plug $r=0,1,2$ into this diagram, the kernels and cokernels are easy to compute with what we know thus far; for example, see Figure \ref{fig:dots1}. We obtain the following:

\vspace{.2cm}
\[
    |\text{coker}(\lambda_0^{1,g})|  \; = \; |\text{coker}(\lambda_2^{1,g})| \; = \; |\text{ker}(\lambda_2^{1,g})| \; = \;  1,
\]
\vspace{.1cm}
\[
    |\text{ker}(\lambda_0^{1,g})| \; = \; |\text{ker}(\lambda_1^{1,g})| \; = \; |\text{coker}(\lambda_1^{1,g})| \; = \; 0.
\]
\vspace{.2cm}

\noindent Using equation (\ref{eq:lambda}) we then deduce, for all $g\geqslant 2$, that $h_0^g=1$, $h_1^g = 0$ and $h_2^g=1$. The first two of these equalities alternatively follow from Newstead's Theorem 1 \cite{newstead-mv}, which says that the framed moduli space is simply connected for $g\geqslant 2$. 

In trying to compute the kernel of the next map $\lambda_3^{1,g}$ to determine $h_3^g$, we find that the answer depends on $\nu_2^1$, which we have not yet determined. To help solve for the map $\nu_2^1$ we will look at the genus 2 moduli space. Before proceeding with this, we make a short digression regarding the Leray-Serre spectral sequence for the framed moduli space.

The cohomological Leray-Serre spectral sequence for the $SO(3)$-fibration $N_g^\#$ with base space $N_g$ is depicted in Figure \ref{fig:ss2} for $g=2$, the details of which will be explained shortly. Write $y$ for the degree 1 generator of $H^\ast(SO(3))$. Now recall $H_1(N_2)=0$; in fact, $N_g$ is simply connected \cite[Cor. 1]{newstead-mv}. Since also $h_1^g=0$ from above, the $d_2$ differential on the $E_2$-page of the spectral sequence must be non-zero on the element $1\otimes y$. Thus $d_2(1\otimes y)=\alpha\otimes 1$, and using the Leibniz rule, we obtain that for any $x\in H^\ast(N_2)$ we have $d_2(x\otimes y^i)=\alpha x\otimes y^{i-1}$ for $i\in\{1,3\}$, and $d_2$ is otherwise $0$.

From here, the only possible element in $E_2$ to survive to $H^2(N_2^\#)$ is represented by $1\otimes y^2$. However, we already computed above that $h_2^2=1$, necessitating its survival. Thus the $E_i$-page differential $d_i$ for $i\geqslant 3$ is zero on the class of $1\otimes y^2$. Since $d_i$ for $i\geqslant 3$ vanishes on the bottom two rows of the $E_i$-page for degree reasons, and every element in the top two rows is a multiple of the class of $1\otimes y^2$, the Leibniz rule implies that $d_i$ vanishes everywhere. Thus we have:

\begin{figure}[t]
\centering{
\begin{tikzpicture}
  \matrix (m) [matrix of math nodes,
    nodes in empty cells,nodes={minimum width=10ex,
    minimum height=5ex,outer sep=-5pt},
    column sep=1ex,row sep=1ex]{
	&&&&&\\
	3 & 1\otimes y^3 & 0  &  \alpha\otimes y^{3} &  \langle\psi_i\rangle\otimes y^3 &  \delta_2\otimes y^3 &  0 &  \alpha\delta_2\otimes y^3 & \\
          2    &  1\otimes y^2    &   0  &  \alpha\otimes y^{2}   &  \langle\psi_i\rangle\otimes y^2 & \delta_2\otimes y^2 & 0 & \alpha\delta_2\otimes y^2 & \\
          1     &  1\otimes y^{\phantom{2}} &  0  & \alpha\otimes y^{\phantom{2}} &  \langle\psi_i\rangle\otimes y^{\phantom{2}} & \delta_2 \otimes y^{\phantom{2}}& 0 &  \alpha\delta_2\otimes y^{\phantom{2}} &\\
          0     &  1\otimes 1^{\phantom{2}}  & 0 &  \alpha\otimes 1^{\phantom{2}}  & \langle\psi_i\rangle\otimes 1^{\phantom{2}} &  \delta_2\otimes 1^{\phantom{2}} & 0 & \alpha\delta_2\otimes 1^{\phantom{2}} &\\
    \quad\strut &   0  &  1  &  2  & 3 & 4 & 5 & 6 & \strut \\};
 \draw[-stealth] (m-4-2.south east) -- (m-5-4.north west);
 \draw[-stealth] (m-2-2.south east) -- (m-3-4.north west);
 \draw[-stealth] (m-4-6.south east) -- (m-5-8.north west);
 \draw[-stealth] (m-2-6.south east) -- (m-3-8.north west);
\draw[thick] (m-1-1.east) -- (m-6-1.east) ;
\draw[thick] (m-6-1.north) -- (m-6-9.north) ;
\end{tikzpicture}}
\caption{The $E_2$-page in the Leray-Serre spectral sequence for $N_2^\#$}\label{fig:ss2}
\end{figure}
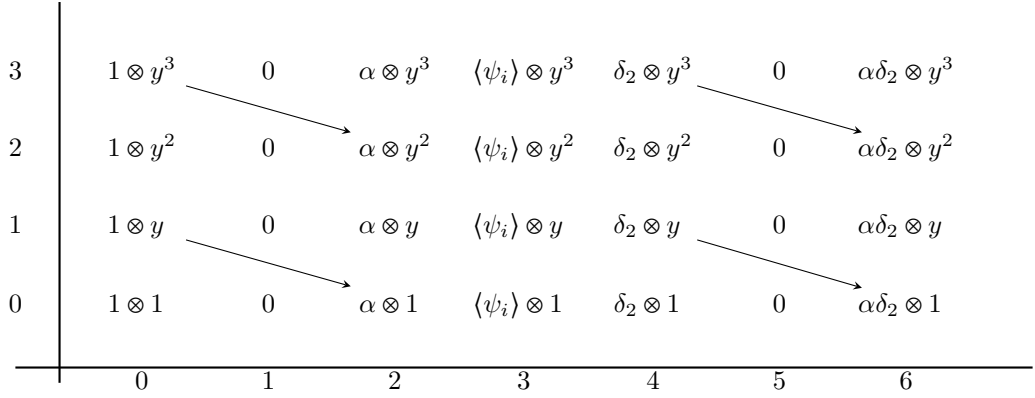

\vspace{.5cm}
\begin{prop}\label{prop:ss}
For $g\geqslant 2$, the $E_2$-page differential in the cohomological Leray-Serre spectral sequence for $N_g^\#$ sends $x\otimes y^i$ to the element $\alpha x \otimes y^{i-1}$ for $i\in \{1,3\}$ and any $x\in H^\ast(N_g)$, and is otherwise zero. The spectral sequence collapses at the $E_3$-page. Consequently, we have the formula

\vspace{.25cm}
\[
    h_r^g \; = \; |\text{{\emph{coker}}}(\alpha_{r-2}^g)| + |\text{{\emph{ker}}}(\alpha_{r-1}^g)|  + |\text{{\emph{coker}}}(\alpha_{r-4}^g)| + |\text{{\emph{ker}}}(\alpha_{r-3}^g)|
\]
\vspace{.25cm}

\noindent where $\alpha^g_r:H_r(N_g)\longrightarrow H_{r+2}(N_g)$ is the map defined by cup product with $\alpha$.
\end{prop}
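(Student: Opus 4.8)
The plan is to run the standard multiplicative Leray--Serre spectral sequence of the $SO(3)$-fibration $N_g^\#\to N_g$, exactly as sketched for $g=2$ in the text, and to feed in the facts already in hand: $N_g$ is simply connected (so $H^1(N_g)=0$ and the local system on the fiber is trivial), $H^2(N_g;\F)=\F\langle\alpha\rangle$, and the values $h_0^g=1$, $h_1^g=0$, $h_2^g=1$ obtained for all $g\geqslant 2$ from the Mayer--Vietoris computation above. Since $\F$ is a field and $\pi_1(N_g)$ acts trivially, the $E_2$-page is $E_2^{p,q}=H^p(N_g;\F)\otimes H^q(SO(3);\F)$; writing $H^\ast(SO(3);\F)=\F[y]/(y^4)$ with $|y|=1$, each of the rows $q=0,1,2,3$ is a copy of $H^\ast(N_g;\F)$.

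First I would pin down $d_2(1\otimes y)$. In total degree $1$ the only nonzero entry of $E_2$ is $E_2^{0,1}=\F\langle y\rangle$, because $E_2^{1,0}=H^1(N_g)=0$. The only differential that can act nontrivially on $1\otimes y$ is $d_2\colon E_2^{0,1}\to E_2^{2,0}$, as every higher and every incoming differential lands in a vanishing bidegree. Since $h_1^g=0$, the class $1\otimes y$ must die, so $d_2(1\otimes y)$ is a nonzero element of $E_2^{2,0}=H^2(N_g;\F)=\F\langle\alpha\rangle$, forcing $d_2(1\otimes y)=\alpha\otimes 1$. The derivation property then propagates this: $d_2(x\otimes 1)=0$ for degree reasons, $d_2(x\otimes y)=\alpha x\otimes 1$, $d_2(1\otimes y^2)=2\,\alpha\otimes y=0$ in characteristic $2$, and $d_2(x\otimes y^3)=\alpha x\otimes y^2$. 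This is the asserted $E_2$-differential.

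Next I would read off $E_3$. On the rows $q=0,2$ the differential $d_2$ is zero outgoing and cup product with $\alpha$ incoming, giving $E_3^{p,0}=E_3^{p,2}=\operatorname{coker}(\,\cup\alpha\colon H^{p-2}\to H^p)$; on the rows $q=1,3$ it is cup product with $\alpha$ outgoing and zero incoming, giving $E_3^{p,1}=E_3^{p,3}=\operatorname{ker}(\,\cup\alpha\colon H^p\to H^{p+2})$. To prove collapse I must kill all $d_i$ with $i\geqslant 3$. On the bottom rows $q=0,1$ this is automatic, since such $d_i$ drop $q$ by at least $2$ and so land off the page. For the top rows I would use multiplicativity: the class $[1\otimes y^2]$ generates $E_3^{0,2}$, and multiplication by it (which is well defined since $d_2(y^2)=0$) gives isomorphisms $E_3^{p,0}\xrightarrow{\;\cong\;}E_3^{p,2}$ and $E_3^{p,1}\xrightarrow{\;\cong\;}E_3^{p,3}$, so every top-row class is a product of a bottom-row class with $[1\otimes y^2]$. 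As $d_i$ is a derivation that already vanishes on the bottom rows, it then suffices to show $[1\otimes y^2]$ is a permanent cycle.

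This survival of $[1\otimes y^2]$ is the one step using external input, and the point I expect to be the crux. In total degree $2$ the $E_3$-page has $E_3^{2,0}=\operatorname{coker}(\,\cup\alpha\colon H^0\to H^2)=0$ (since $\alpha$ generates $H^2$) and $E_3^{1,1}=\operatorname{ker}(\,\cup\alpha\colon H^1\to H^3)=0$ (since $H^1(N_g)=0$), so $E_3^{0,2}=\F\langle[1\otimes y^2]\rangle$ is the only contribution to $H^2(N_g^\#;\F)$. Because $h_2^g=1$, this class must survive to $E_\infty$, precisely as in the $g=2$ argument, and the Leibniz rule then forces $d_i=0$ everywhere for $i\geqslant 3$. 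With $E_\infty=E_3$ and no extension problems over a field, summing dimensions along each antidiagonal yields
\[
h_r^g=\dim E_3^{r,0}+\dim E_3^{r-1,1}+\dim E_3^{r-2,2}+\dim E_3^{r-3,3},
\]
which is the stated formula once the kernels and cokernels of cup product with $\alpha$ on $H^\ast(N_g;\F)$ are identified, via the duality $H^\ast(N_g;\F)\cong H_\ast(N_g;\F)$ available over the field $\F$, with those of the homology operator $\alpha_r^g$.
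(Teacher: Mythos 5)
Your proof is correct and follows essentially the same route as the paper: $d_2(1\otimes y)=\alpha\otimes 1$ is forced by $h_1^g=0$, propagated by the Leibniz rule (with $d_2(1\otimes y^2)=0$ in characteristic $2$), and the survival of $1\otimes y^2$ is forced by $h_2^g=1$, after which multiplicativity kills all higher differentials. Your write-up merely makes explicit some steps the paper leaves implicit (the identification of $E_3$ rows with kernels and cokernels of cup product by $\alpha$, and the vanishing of $E_3^{2,0}$ and $E_3^{1,1}$ in total degree $2$).
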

\vspace{.5cm}

Now we explain the genus 2 case more fully. The moduli space $N_2$ is 6-dimensional, and its cohomology ring over $\F$ is generated be a degree 2 element $\alpha$, degree 3 elements $\psi_1,\psi_2,\psi_3,\psi_4$, and a degree 4 element $\delta_2$. The ring structure is determined by the following: the only top degree monomials that pair nontrivially with the fundamental class $[N_2]$ are the following:

\vspace{.2cm}
\[
    \alpha\delta_2, \quad \psi_1\psi_3, \quad \psi_2\psi_4.
\]
\vspace{.2cm}

\noindent In particular, $\alpha^2=0$. This ring, and in fact the corresponding ring with integer coefficients, is described in Remark 2 of Section 10 in \cite{newstead-mv}. 

Now Figure \ref{fig:ss2} is obtained from this description of the ring and Proposition \ref{prop:ss}. The arrows drawn represent the non-trivial $E_2$ differentials. Note that we have written $\langle \psi_i \rangle$ for the 4-dimensional vector space with basis the $\psi_i$ classes. The numbers $h_r^2$ are then computed from Figure \ref{fig:ss2} to be

\vspace{.2cm}
\begin{equation}
    1, \, 0,\, 1,\, 5,\,5,\,5,\,5,\, 1,\,0,\,1.\label{eq:g2betti}
\end{equation}
\vspace{.2cm}

\noindent We are now in a position to compute $\nu_2^1$. Consider the map $\lambda_3^{1,1}$. Referring to Figure \ref{fig:dots1}, we find that the cokernel of this map is $4$ or $6$, depending on whether $\nu_2^1$ is an isomorphism or not, respectively. Since we now know that $h_3^2=5$, and from above $|\text{ker}(\lambda_2^{1,1})|=1$, equation (\ref{eq:lambda}) implies that $\nu_2^1$ must in fact be an isomorphism. We now have the first part of:

\begin{figure}[t]
  \centering{
\includegraphics[scale=1]{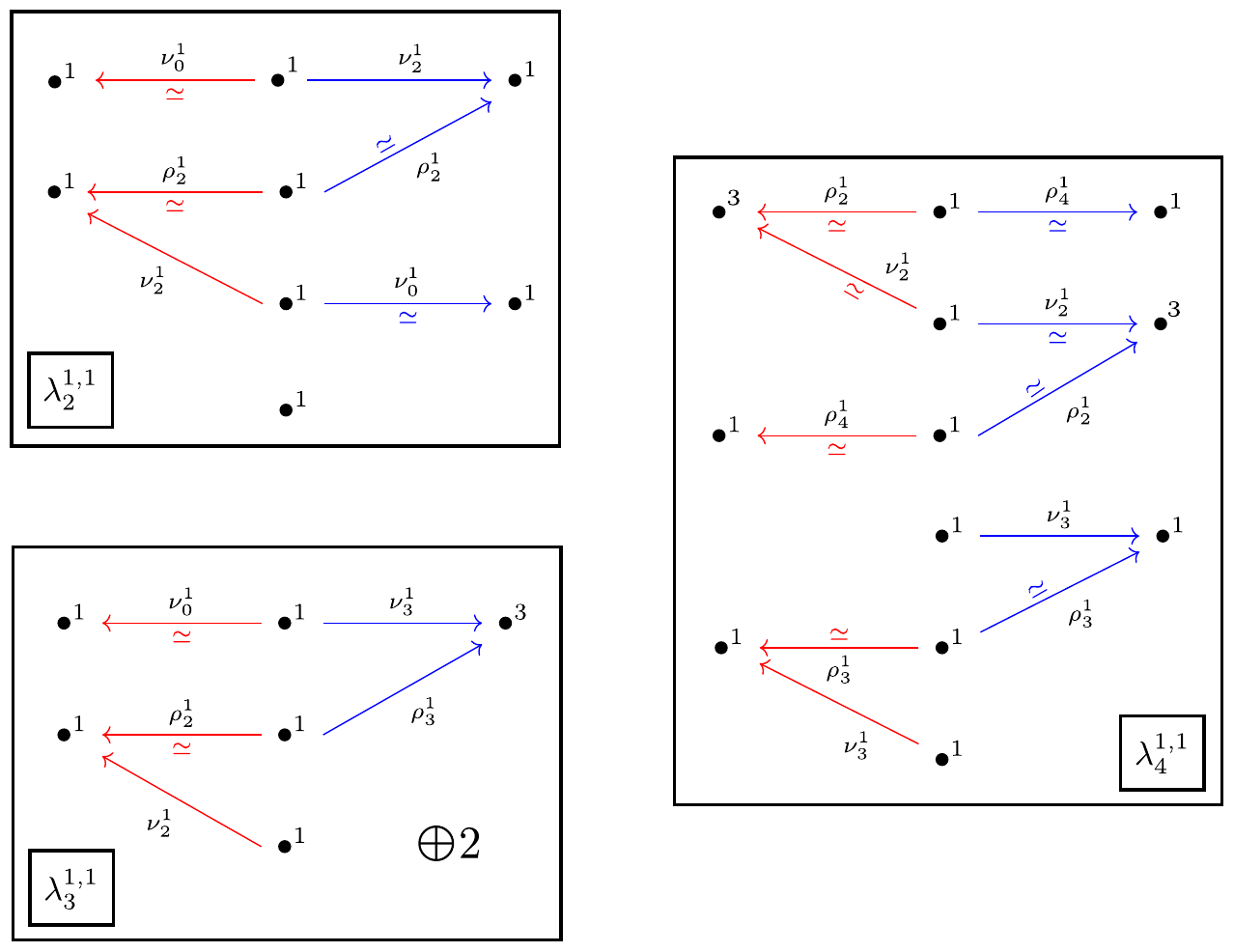}
\vspace{.2cm}
}
  \caption{{\small{The maps $\lambda_2^{1,1}$, $\lambda_3^{1,1}$ and $\lambda_4^{1,1}$. Each vector space has been replaced by a dot $\bullet$. The dimension of each vector space is written as a superscript of each $\bullet$. The notation $\oplus 2$ in the lower left pane indicates that the map $\lambda_3^{1,1}$ consists of two copies of the depicted map. The lone lower dot in the upper left pane of $\lambda_2^{1,1}$ comes from the domain of $\iota_1^1\otimes\nu_1^1$.}}}\label{fig:dots1}
\end{figure}

\vspace{.6cm}
\begin{lemma}\label{lemma:nu1}
The maps $\nu_2^1$ and $\nu_3^1$ are injective.
\end{lemma}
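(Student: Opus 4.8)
The first assertion is already essentially in hand: in the discussion above, matching the cokernel of $\lambda_3^{1,1}$ --- which equals $4$ or $6$ according as $\nu_2^1$ is or is not an isomorphism --- against $h_3^2=5$ via $(\ref{eq:lambda})$ and the known value $|\text{ker}(\lambda_2^{1,1})|=1$ forces $\nu_2^1$ to be an isomorphism, hence injective. For $\nu_3^1\colon H_3(N_1^\#)\to H_3(N_1^+)$, a map $\F\to\F^3$, injectivity is exactly the statement that its rank is $1$ rather than $0$. A preliminary reduction comes from the long exact sequence of the pair $(N_1^+,\partial N_1^+)$: the relation $|\text{ker}(\mu_3^1)|=h_3^1+m_4^1=1$ recorded in Section $\ref{sec:preliminaries}$ shows that $\mu_3^1$, whose source $H_3(\partial N_1^+)$ is two-dimensional, has rank $1$; since $\rho_3^1$ is already injective by Lemma $\ref{lemma:rho}$, its image exhausts the image of $\mu_3^1$, so $\text{im}(\nu_3^1)\subseteq\text{im}(\rho_3^1)$. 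Thus the only question is whether $\nu_3^1$ is zero or carries $H_3(N_1^\#)$ isomorphically onto that fixed line.

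To decide this I would push the Mayer--Vietoris bookkeeping one degree higher, using the now-known value $h_4^2=5$ from $(\ref{eq:g2betti})$. The key simplification is that, once $\nu_2^1$ is an isomorphism, the cokernel of $\lambda_3^{1,1}$ equals $4$; since its source and target have dimensions $6$ and $10$, rank--nullity gives $\text{ker}(\lambda_3^{1,1})=0$. Equation $(\ref{eq:lambda})$ then reads $h_4^2=|\text{coker}(\lambda_4^{1,1})|+|\text{ker}(\lambda_3^{1,1})|=|\text{coker}(\lambda_4^{1,1})|$, so $|\text{coker}(\lambda_4^{1,1})|=5$ and hence $\lambda_4^{1,1}$ has rank $5$, its target again having dimension $10$. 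The plan is then to expand $\lambda_4^{1,1}$ through the K\"{u}nneth diagram exactly as in Figure $\ref{fig:dots1}$, substitute the genus $1$ data --- the maps $\rho_\bullet^1$ from Lemma $\ref{lemma:rho}$, the map $\nu_0^1$ from the genus $1$ table, and the isomorphism $\nu_2^1$ just established --- and read off the rank of $\lambda_4^{1,1}$ as a function of the single remaining unknown $\text{rank}(\nu_3^1)\in\{0,1\}$. Matching this rank to $5$ should force $\text{rank}(\nu_3^1)=1$.

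The main obstacle is this final rank computation. The subtlety, special to the base case $g=1$, is that the genus $g$ data coincides with the genus $1$ data, so $\nu_3^1$ occurs several times in $\lambda_4^{1,1}$: on the red side through $\nu_3^1\otimes\iota_\bullet^1$ and on the blue side through $\iota_\bullet^1\otimes\nu_3^1$. Moreover the red target block $H_3(N_1^+)\otimes H_\bullet(N_1^\#)$ that $\nu_3^1$ hits is simultaneously in the image of $\rho_3^1$, and the summand carrying $\nu_3^1$ is coupled, through the common source $H_\bullet(S^2\times N_1^\#\times N_1^\#)$, to the remaining K\"{u}nneth components. One must therefore verify with care that the rank of $\lambda_4^{1,1}$ genuinely drops when $\text{rank}(\nu_3^1)=0$, so that the known value $\text{rank}(\lambda_4^{1,1})=5$ actually discriminates between the two possibilities rather than being insensitive to them. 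Granting this, the value selects the injective alternative and completes the proof.
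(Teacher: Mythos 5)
Your treatment of $\nu_2^1$ is exactly the paper's argument, and your preliminary reductions for $\nu_3^1$ are also correct and coincide with what the paper uses: that $\text{im}(\nu_3^1)\subseteq\text{im}(\rho_3^1)$ because $\mu_3^1$ has rank $1$, that $\text{ker}(\lambda_3^{1,1})=0$ by rank--nullity (source and target of $\lambda_3^{1,1}$ have dimensions $6$ and $10$, and its cokernel is $4$), and hence that $|\text{coker}(\lambda_4^{1,1})|=5$ from (\ref{eq:lambda}) and $h_4^2=5$. The gap is that you stop at precisely the step that carries all the content of the second assertion: you never compute the rank of $\lambda_4^{1,1}$ as a function of $\text{rank}(\nu_3^1)$, and you state the conclusion only conditionally (``Granting this\dots''). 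The worry you raise --- that the rank of $\lambda_4^{1,1}$ might be insensitive to $\nu_3^1$, since $\nu_3^1$ appears in both the red and blue K\"{u}nneth components and its target is also hit by $\rho_3^1$ --- is exactly what must be ruled out, and ruling it out is the paper's proof (it is the diagram computation recorded in Figure \ref{fig:dots1}, giving cokernel $5$ or $6$ according as $\nu_3^1$ is injective or zero). As written, your proposal therefore establishes the lemma only for $\nu_2^1$.

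For the record, the deferred computation is short and does discriminate. Let $u$, $w$, $s$ generate $H_1(N_1^\#)$, $H_3(N_1^\#)$, $H_2(S^2)$. Of the six one-dimensional K\"{u}nneth lines in the domain of $\lambda_4^{1,1}$, the three passing through the isomorphisms $\nu_2^1$, $\rho_2^1$, $\rho_4^1$ have images of total rank $3$ lying in codomain summands disjoint from those involving $H_3(N_1^+)$, so they split off. The remaining three lines map by $1\otimes u\otimes w \mapsto \bigl(0,\, u\otimes\nu_3^1(w)\bigr)$ (the red component dies since $H_1(N_1^+)=0$), $1\otimes w\otimes u \mapsto \bigl(\nu_3^1(w)\otimes u,\, 0\bigr)$, and $s\otimes u\otimes u \mapsto \bigl(\rho_3^1(s\otimes u)\otimes u,\, u\otimes\rho_3^1(s\otimes u)\bigr)$. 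If $\nu_3^1=0$, these contribute rank $1$, so $\text{rank}(\lambda_4^{1,1})=4$ and the cokernel is $6$. If $\nu_3^1\neq 0$, then your containment $\text{im}(\nu_3^1)\subseteq\text{im}(\rho_3^1)$ and $\F=\Z/2$ force $\nu_3^1(w)=\rho_3^1(s\otimes u)$, so the third image vector is the sum of the first two: the three span a $2$-dimensional space --- two new vectors, one new relation, net gain exactly $1$ --- giving $\text{rank}(\lambda_4^{1,1})=5$ and cokernel $5$. Matching against $|\text{coker}(\lambda_4^{1,1})|=5$ then selects $\nu_3^1\neq 0$, i.e.\ injectivity. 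With this paragraph inserted, your argument is complete and is precisely the paper's proof.
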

\vspace{.6cm}

\noindent To compute $\nu_3^1$ we next consider $\lambda_4^{1,1}$. Referring again to Figure \ref{fig:dots1}, we find that the cokernel of $\lambda_4^{1,1}$ has dimension equal to 5 or 6 depending on whether $\nu_3^1$ is injective or not, respectively. We are using our knowledge that the image of $\nu_3^1$ is contained in that of $\rho_3^1$, as follows from $\mu_3^1$ having rank 1. From above, the kernel of $\lambda_3^{1,1}$ has dimension 0. Finally, from (\ref{eq:g2betti}) we have $h_4^2=5$, and this forces via (\ref{eq:lambda}) the dimension of the cokernel of $\lambda_4^{1,1}$ to be 5, implying that $\nu_3^1$ is injective. This completes the proof of the lemma.

\vspace{.7cm}

\section{Applying the Mayer-Vietoris argument}\label{sec:mainarg}

With all of the genus 1 data computed, we are now in a position to prove Theorem \ref{theorem:main}. Referring to Figure \ref{fig:dots2}, we first replace $\lambda_r^{1,g}$ with a map $\psi_r^{1,g}$ that has the same kernel and cokernel. We will shortly focus on this latter map.

Going from $\lambda_r^{1,g}$ to its simplification $\psi_r^{1,g}$ is only a matter of linear algebra over $\F$. In fact, from the diagrammatic perspective, it is a standard manipulation in the context of computing homology groups over $\F$, usually referred to there as Gaussian elimination. For example, when an arrow is an isomorphism and no other arrow touches its codomain, then we can eliminate the arrow, along with its domain and codomain. This rule allows us to erase from $\lambda_r^{1,g}$ the top left arrow $\nu_0^1\otimes \iota_r^g$ as well as the arrow corresponding to $\rho_4^1\otimes\iota_{r-4}^g$. Next, the fact that $\nu_2^1$ and $\rho_2^1$ are isomorphisms allows us to join the domains of $\iota_0^1\otimes\rho_r^g$ and $\iota_2^1\otimes\nu_{r-2}^g$. We can do the same for $\nu_3^1$ and $\rho_3^1$ to join the domains of $\iota_1^1\otimes\rho_{r-1}^g$ and $\iota_3^1\otimes\nu_{r-3}^g$, except that $\nu_3^1$ and $\rho_3^1$ are only isomorphisms onto their common images: we must also save a complement of this image in their codomain, which will be of dimension $2h_{r-1}^g$. The result after doing these manipulations is the diagram defining $\psi_r^{1,g}$.

\begin{figure}[t]
  \centering{
\includegraphics[scale=1.05]{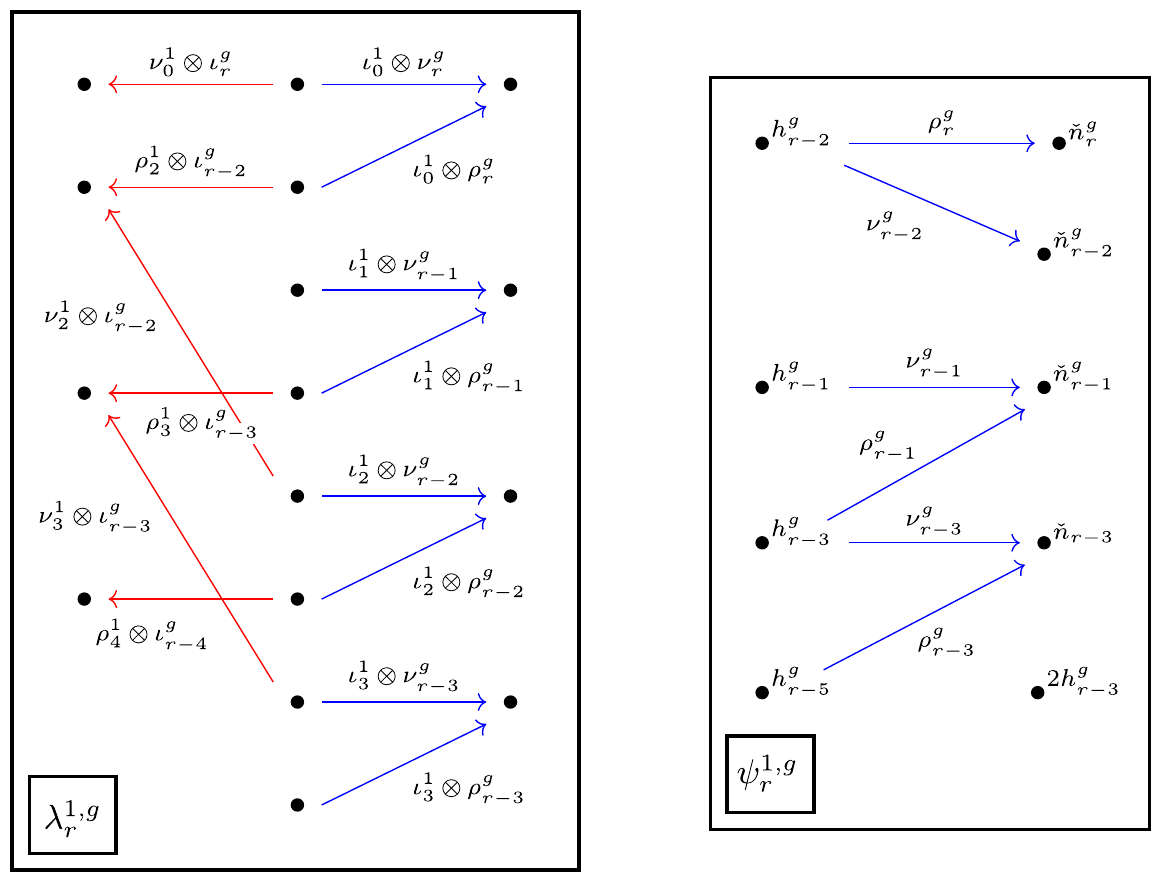}
}
\vspace{.2cm}
\caption{{\small{In the left hand pane, we have simply redrawn the above expansion of $\lambda_r^{1,g}$ with a dot $\bullet$ replacing the name of each vector space. The computation of all the left hand (red) maps in this pane allows us to replace $\lambda_r^{1,g}$ with the map $\psi_r^{1,g}$ defined in the right hand pane.}}}\label{fig:dots2}
\end{figure}

Now Lemma \ref{lemma:rho} allows us to compute the kernel and cokernel of $\psi_r^{1,g}$ in many cases. For example, suppose that $r\leqslant 3g+1$ and $r\equiv 1,2 $ (mod 3). Then the part of the map consisting of $\rho_r^g$ and $\nu_{r-2}^g$ in the diagram for $\psi_r^{1,g}$ does not contribute to the kernel, since $\rho_r^g$ is injective. Also, $\rho_r^g$ is an isomorphism, so it along with its domain and codomain can be eliminated from consideration. After this, as far as the kernel goes, we are left only with the part of the map consisting of $\nu_{r-1}^g$ and $\rho_{r-1}^g$, which is exactly $\mu_{r-1}^g$. For $r\leqslant 3g$ we have $|\text{ker}(\mu_{r-1}^g)|=h_{r-1}^g-m_{r-1}^g$, while for $r=3g+1$ we have instead  $|\text{ker}(\mu_{3g}^{g})| = h_{3g}^g$. We have deduced the first two parts of:

\vspace{.6cm}
\begin{lemma}
${}$\label{lemma:lambdaker}
\begin{enumerate}

\item If $r < 3g+1$ and $r\equiv 1,2$ {\emph{(mod 3)}}, then $|\text{{\emph{ker}}}(\lambda_r^{1,g})| =  h_{r-1}^g-m_{r-1}^g$.

\item If $r=3g+1$ then $|\text{{\emph{ker}}}(\lambda_{3g+1}^{1,g})| = h_{3g}^g$.

\item If $r\geqslant 3g+4$ and $r\equiv 0,1$ {\emph{(mod 3)}}, then $|\text{{\emph{ker}}}(\lambda_r^{1,g})| = h_{r-1}^g + m_r^g$.

\item If $r < 3g+1$ and $r\equiv 1,2$ {\emph{(mod 3)}}, then $|\text{{\emph{coker}}}(\lambda_r^{1,g})| = 2h_{r-3}^g +h^g_{r-4}  +m^g_{r-2}$.

\item If $r=3g+1$ then $|\text{{\emph{coker}}}(\lambda_{3g+1}^{1,g})| = 2h^g_{3g-2} + h_{3g-3}^g +m_{3g}^g$.

\item If $r\geqslant 3g+4$ and $r\equiv 0,1$ {\emph{(mod 3)}}, then $|\text{{\emph{coker}}}(\lambda_r^{1,g})| = 2h_{r-3}^g +h_{r-4}^g-m_{r-1}^g$.

\end{enumerate}
\end{lemma}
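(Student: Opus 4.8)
The plan is to compute the kernel and cokernel of $\psi_r^{1,g}$ directly from the simplified diagram in Figure \ref{fig:dots2}, exploiting Lemma \ref{lemma:rho} to dispose of the $\rho^g$-maps whenever the degree $r$ lands in a congruence class where they are isomorphisms. The author has already carried out parts (1) and (2), so I would focus the organizing principle on the observation that, after Gaussian elimination, $\psi_r^{1,g}$ splits into two independent pieces: one built from $\rho_r^g$ together with $\nu_{r-2}^g$ (joined because $\nu_2^1,\rho_2^1$ are isomorphisms), and one built from $\rho_{r-1}^g$ together with $\nu_{r-1}^g$ (which together form $\mu_{r-1}^g$), plus a saved complement of dimension $2h_{r-1}^g$ coming from the $\nu_3^1,\rho_3^1$ manipulation. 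The kernel and cokernel are then additive over these pieces.

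For the kernel computations in (1)--(3): when $r\equiv 1,2\pmod 3$ and $r\le 3g+1$, Lemma \ref{lemma:rho}(1) makes $\rho_r^g$ injective (indeed an isomorphism when $r\not\equiv 0$), so the first piece contributes nothing to the kernel and $\ker(\psi_r^{1,g})=\ker(\mu_{r-1}^g)$; substituting the stated formula $|\ker(\mu_{r-1}^g)|=h_{r-1}^g-m_{r-1}^g$ valid for $r-1<3g$ gives (1), and the boundary case $r=3g+1$ uses instead $|\ker(\mu_{3g}^g)|=h_{3g}^g$, giving (2). For (3), when $r\ge 3g+4$ and $r\equiv 0,1\pmod 3$, Lemma \ref{lemma:rho}(2) makes the relevant $\rho^g$-maps surjective and again the kernel reduces to that of $\mu_{r-1}^g$, now in the range $r-1\ge 3g$ where $|\ker(\mu_{r-1}^g)|=h_{r-1}^g+m_r^g$. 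The cokernel computations (4)--(6) run dually: here the first piece $(\rho_r^g,\nu_{r-2}^g)$ contributes its cokernel while $\mu_{r-1}^g$ contributes nothing (being surjective onto the saved complement, or having full image in the appropriate range), so one must account for the cokernel of $\rho_r^g$ (dimension $m_r^g$ or $m_{r-2}^g$ as dictated by Lemma \ref{lemma:rho}), the $2h_{r-3}^g$ contribution from the doubled $\nu^g$-piece, and the $h_{r-4}^g$ contribution from the top $\rho_4^1$-row. Carefully tallying these and matching the $m_\bullet^g$ index-shifts against the two cases of Lemma \ref{lemma:rho} yields the three stated cokernel formulae.

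The main obstacle I anticipate is bookkeeping rather than any genuine difficulty: one must correctly track which copies of $H_\bullet(N_g^\#)$ survive the Gaussian elimination, ensure that the image of $\nu_{r-1}^g$ sits inside the image of $\rho_{r-1}^g$ (as was used in the genus-1 analysis, and which is forced by the rank of $\mu$), and verify that no arrow left over from the simplification secretly couples the two pieces. The boundary degree $r=3g+1$ is delicate because $\rho_{3g+1}^g$ is injective but not surjective (its cokernel has dimension $m_{3g+1}^g=0$, yet $\rho_{3g}^g$ near it behaves differently), so the two special cases (2) and (5) have to be checked by hand against the general congruence-class formulae rather than deduced from them. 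The final step is to translate each $|\ker(\psi_r^{1,g})|$ and $|\operatorname{coker}(\psi_r^{1,g})|$ back to $\lambda_r^{1,g}$, which is immediate since $\psi_r^{1,g}$ was constructed to have the same kernel and cokernel.
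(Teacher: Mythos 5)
Your overall strategy is the paper's, and your kernel conclusions are the correct ones, but your structural description of $\psi_r^{1,g}$ has a genuine gap that the argument as written cannot bridge. After the Gaussian elimination, $\psi_r^{1,g}$ does \emph{not} split into the piece $(\rho_r^g,\nu_{r-2}^g)$ plus $\mu_{r-1}^g$ plus a saved complement. The second piece is the larger zigzag
\[
H_{r-1}(N_g^\#)\xrightarrow{\;\nu_{r-1}^g\;} H_{r-1}(N_g^+)\xleftarrow{\;\rho_{r-1}^g\;} H_{r-3}(N_g^\#)\xrightarrow{\;\nu_{r-3}^g\;} H_{r-3}(N_g^+)\xleftarrow{\;\rho_{r-3}^g\;} H_{r-5}(N_g^\#),
\]
because joining the domains of $\iota_1^1\otimes\rho_{r-1}^g$ and $\iota_3^1\otimes\nu_{r-3}^g$ leaves a single copy of $H_{r-3}(N_g^\#)$ mapping by \emph{both} $\rho_{r-1}^g$ and $\nu_{r-3}^g$, and the domain $H_{r-5}(N_g^\#)$ of $\iota_3^1\otimes\rho_{r-3}^g$ remains attached. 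Consequently $\ker(\psi_r^{1,g})=\ker(\mu_{r-1}^g)$ does not follow from properties of $\rho_r^g$ alone: in general the kernel of the zigzag also picks up contributions from $\rho_{r-3}^g$, exactly as in equation (\ref{eq:kerpsi}) of the paper, where the term $|\text{ker}(\rho_{r-3}^g)|$ appears. The missing step, which the paper supplies (explicitly in its treatment of item 3: ``the map $\rho_{r-3}^g$ is also an isomorphism\dots''), is that the congruence hypotheses make $\rho_{r-3}^g$ an isomorphism in every stated range: for $r\le 3g+1$, $r\equiv 1,2$ (mod 3) one has $r-3\le 3g-2$ and $r-3\equiv 1,2$; for $r\ge 3g+4$, $r\equiv 0,1$ one has $r-3\ge 3g+1$ and $r-3\equiv 0,1$. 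Only after Gaussian-eliminating $\rho_{r-3}^g$ as well does the kernel reduce to that of $\mu_{r-1}^g$. Note in particular that your appeal to mere \emph{surjectivity} in item (3) is insufficient: a surjective $\rho_{r-3}^g$ with nonzero kernel would inflate $|\ker(\psi_r^{1,g})|$ by $|\ker(\rho_{r-3}^g)|$. (Minor point: the saved complement has dimension $2h_{r-3}^g$; the paper's prose contains the slip $2h_{r-1}^g$ which you copied, though your later tally uses the correct value.)

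Your proposed dual computation of the cokernels (4)--(6) also misattributes terms and would not close as sketched. The $h_{r-4}^g$ does not come from ``the top $\rho_4^1$-row'' --- that row is cancelled outright against its isomorphic domain in the elimination --- but from the codomain $H_{r-2}(N_g^+)$ of $\nu_{r-2}^g$, whose dimension is $\check{n}_{r-2}^g=h_{r-4}^g+m_{r-2}^g$ (lower range) or $h_{r-4}^g-m_{r-1}^g$ (upper range) by Lemma \ref{lemma:n}; this is also the actual source of the $m_{r-2}^g$ and $-m_{r-1}^g$ terms, not the cokernel of $\rho_r^g$, which vanishes in these congruence classes. Likewise ``$\mu_{r-1}^g$ is surjective onto the saved complement'' cannot be right, since nothing maps into the saved complement; what is true (and needs justification from the formulae of Section \ref{sec:preliminaries}) is that $\mu_{r-1}^g$ is surjective onto $H_{r-1}(N_g^+)$ in these ranges. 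All of this can be repaired, but the paper's route is shorter and avoids it entirely: once the kernel formulae (1)--(3) are established, $|\text{coker}(\lambda_r^{1,g})|-|\text{ker}(\lambda_r^{1,g})|$ equals the difference of the dimensions of the codomain and domain of $\lambda_r^{1,g}$, both explicit via the K\"{u}nneth theorem, the genus 1 table, and Lemma \ref{lemma:n}, and items (4)--(6) drop out by arithmetic.
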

\vspace{.6cm}

\noindent The third item in the lemma is proven similarly: in this range, $\rho_{r}^g$ is an isomorphism, so again the top part of the diagram for $\psi_r^{1,g}$ contributes no kernel. The map $\rho^g_{r-3}$ is also an isomorphism, and in the same way as before we identify the kernel of $\psi_r^{1,g}$ with that of $\mu_{r-1}^g$. The only difference is that in this range we have $|\text{ker}(\mu_{r-1}^g)|=h_{r-1}^g+m_{r}^g$. The latter three items of the lemma follow from the first three by simply inspecting the dimensions of the domain and codomain of $\lambda_r^{1,g}$.\\

\begin{proof}[Proof of (i)-(ii) in Thm. \ref{theorem:main}]
Substitute the items of Lemma \ref{lemma:lambdaker} into (\ref{eq:lambda}).
\end{proof}

\vspace{.5cm}

\begin{lemma}\label{lemma:kercap}
For all $r$, the inequalities {\emph{\ref{eq:rec1}$-$\ref{eq:rec3}}} are valid. Equality holds if and only if for all $r$,
\vspace{.2cm}
\begin{equation}
    |\text{{\emph{ker}}}(\rho_r^g)\cap \text{{\emph{ker}}}(\nu_{r-2}^g)| \; = \; 0. \qquad \label{eq:kercap}
\end{equation}
\end{lemma}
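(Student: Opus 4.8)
The starting point is the identity $h_r^{g+1} = |\text{coker}(\psi_r^{1,g})| + |\ker(\psi_{r-1}^{1,g})|$ coming from (\ref{eq:lambda}), so it suffices to control the kernel and cokernel of the single simplified map $\psi_r^{1,g}$. The first step is a purely formal observation about the diagram defining $\psi_r^{1,g}$: no domain vertex maps simultaneously into an even and an odd $H_\ast(N_1^\#)$-slot of the target, so over $\F$ the map splits as a direct sum $\psi_r^{1,g} = E_r\oplus O_r\oplus Z_r$. Here $Z_r$ is the free cokernel summand of dimension $2h_{r-3}^g$; the even block $E_r$ has target $H_r(N_g^+)\oplus H_{r-2}(N_g^+)$ and is assembled from the combined map $(\rho_r^g,\nu_{r-2}^g)$ together with $\rho_{r-2}^g$; and the odd block $O_r$ has target $H_{r-1}(N_g^+)\oplus H_{r-3}(N_g^+)$ and is assembled from $\nu_{r-1}^g$, the combined map $(\rho_{r-1}^g,\nu_{r-3}^g)$, and $\rho_{r-3}^g$. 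By construction the kernels of the two combined maps are exactly $\ker(\rho_r^g)\cap\ker(\nu_{r-2}^g)$ and $\ker(\rho_{r-1}^g)\cap\ker(\nu_{r-3}^g)$, the quantities appearing in (\ref{eq:kercap}).

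Next I would dispose of the blocks that cannot detect the unknown maps $\nu^g$. The summand $Z_r$ contributes $2h_{r-3}^g$ to the cokernel. For $E_r$, the point is that $\nu_{r-2}^g$ drops out entirely: either $\rho_r^g$ is injective (Lemma \ref{lemma:rho}(1), for $r\le 3g+1$), so its kernel --- the only place $\nu_{r-2}^g$ can enter the cokernel computation --- is zero; or $r\ge 3g+2$, in which case $\text{im}(\nu_{r-2}^g)\subseteq\text{im}(\rho_{r-2}^g)$. This last containment comes from a rank comparison: the formula for $|\ker(\mu_s^g)|$ recorded in Section \ref{sec:preliminaries} gives $\text{rank}(\mu_s^g)=\text{rank}(\rho_s^g)$ away from the degrees $s\equiv 0$, $s\le 3g-3$, and since $\text{im}(\mu_s^g)=\text{im}(\nu_s^g)+\text{im}(\rho_s^g)$ this forces $\text{im}(\nu_s^g)\subseteq\text{im}(\rho_s^g)$ there. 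In both cases a short computation with Lemmas \ref{lemma:rho} and \ref{lemma:n} shows that $|\ker(E_r)|$ and $|\text{coker}(E_r)|$ take their baseline values, independent of $\nu^g$, and reproduce the corresponding pieces of the right-hand sides of \ref{eq:rec1}--\ref{eq:rec3}.

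All the slack therefore lives in the odd block $O_r$. Writing $V=(\nu_{r-3}^g)^{-1}(\text{im}(\rho_{r-3}^g))$ and projecting the image of $O_r$ onto its two target summands, one obtains
\[
|\text{coker}(O_r)| \;=\; |\text{coker}(\mu_{r-3}^g)| + |\text{coker}(\mu_{r-1}^g)| + \delta_r,
\]
where $\delta_r = \dim\big(\text{im}(\mu_{r-1}^g)/(\text{im}(\nu_{r-1}^g)+\rho_{r-1}^g(V))\big)\ge 0$ is manifestly a nonnegative quotient dimension and the first two terms are known from Section \ref{sec:preliminaries}. Together with the even and $Z$ contributions these known terms assemble into exactly the right-hand sides of \ref{eq:rec1}--\ref{eq:rec3}. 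Since $|\ker(O_s)|$ exceeds $|\text{coker}(O_s)|$ by a fixed Euler characteristic it carries the same defect $\delta_s$; feeding $|\text{coker}(\psi_r)|$ (which picks up $\delta_r$) and $|\ker(\psi_{r-1})|$ (which picks up $\delta_{r-1}$) into (\ref{eq:lambda}) then yields the inequalities \ref{eq:rec1}--\ref{eq:rec3} at once, with equality for a given $r$ iff the relevant defects vanish.

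The remaining and genuinely substantive step is to identify the vanishing of all $\delta_r$ with the vanishing in (\ref{eq:kercap}). The subtlety is a shift of band: $\delta_r$ is nonzero only in the lower range ($r\equiv 0\pmod 3$, $r\le 3g$), where it records a failure of surjectivity of $\nu_{r-1}^g$ onto $\text{coker}(\rho_{r-3}^g)$, whereas $\ker(\rho_s^g)\cap\ker(\nu_{s-2}^g)$ is nonzero only in the complementary upper range. The plan is to match them through Poincar\'e--Lefschetz duality for $(N_g^+,\partial N_g^+)$ and for the closed manifold $N_g^\#$: the surjectivity defect of $\nu^g$ in degree $r-1$ is dual to the injectivity defect of $\nu^g$ in the complementary degree, and the latter is precisely the kernel intersection in (\ref{eq:kercap}). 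Granting this, $\delta_r=0$ for all $r$ is equivalent to (\ref{eq:kercap}) holding for all $r$, completing the proof. I expect this duality bookkeeping --- carried out carefully across the transition band $3g\le r\le 3g+3$, where Lemma \ref{lemma:rho} switches regimes --- to be the main obstacle; everything else is the linear algebra already set up for Lemma \ref{lemma:lambdaker}.
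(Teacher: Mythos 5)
\noindent Your decomposition of $\psi_r^{1,g}$ into an even block, an odd block, and a pure-cokernel summand is sound, and your odd block is the correct one; the argument fails at the even block. In the passage from $\lambda_r^{1,g}$ to $\psi_r^{1,g}$ the map $\rho_{r-2}^g$ does not survive: its domain $H_2(S^2)\otimes H_2(N_1^\#)\otimes H_{r-4}(N_g^\#)$ also carries the isomorphism $\rho_4^1\otimes\iota_{r-4}^g$ onto $H_4(N_1^+)\otimes H_{r-4}(N_g^\#)$, and since no other arrow hits that codomain, Gaussian elimination deletes this domain together with the arrow $\iota_2^1\otimes\rho_{r-2}^g$. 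The even block of $\psi_r^{1,g}$ is therefore the single combined map $(\rho_r^g,\nu_{r-2}^g):H_{r-2}(N_g^\#)\to H_r(N_g^+)\oplus H_{r-2}(N_g^+)$, and its kernel is precisely $\text{ker}(\rho_r^g)\cap\text{ker}(\nu_{r-2}^g)$. Your claim that for $r\geqslant 3g+2$ the containment $\text{im}(\nu_{r-2}^g)\subseteq\text{im}(\rho_{r-2}^g)$ lets $\nu_{r-2}^g$ drop out is unusable here: the containment itself is true, but $\rho_{r-2}^g$ is not present to do any absorbing --- equivalently, back in $\lambda_r^{1,g}$, any would-be absorbing element produces a nonzero component in $H_4(N_1^+)\otimes H_{r-4}(N_g^\#)$, so kernel elements must have vanishing component in that domain. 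Thus the even block is \emph{not} independent of $\nu^g$; its kernel is exactly the defect the lemma is about, and your scheme defines that defect away.

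\noindent The miscount is visible from duality. In your accounting, both $|\text{coker}(\psi_r^{1,g})|$ and $|\text{ker}(\psi_{r-1}^{1,g})|$ take baseline values for $r\geqslant 3g+4$ (your $\delta$'s live only in the lower range), so (\ref{eq:lambda}) would give \emph{unconditional} equality in \ref{eq:rec3}; since Poincar\'e duality of $N_{g+1}^\#$ carries \ref{eq:rec3} into \ref{eq:rec1} term by term, this would force unconditional equality in \ref{eq:rec1} as well --- that is, a proof of the paper's conjecture, far stronger than the lemma and inconsistent with the slack you retain in the lower range. The paper's proof instead locates the defect in the upper range: for $r\geqslant 3g+2$ and $r\equiv 2$ (mod 3), surjectivity of $\rho_{r-3}^g$ gives
\[
|\text{ker}(\lambda_r^{1,g})| \; = \; |\text{ker}(\rho_{r-3}^g)|+|\text{ker}(\mu_{r-1}^g)|+|\text{ker}(\rho_r^g)\cap\text{ker}(\nu_{r-2}^g)|,
\]
with the same defect shifting the cokernel, and then \ref{eq:rec2} and \ref{eq:rec3} follow from (\ref{eq:lambda}) and Lemma \ref{lemma:lambdaker} as inequalities whose discrepancy is exactly the kernel intersection; the lower range \ref{eq:rec1} --- including your troublesome $r\equiv 0$ cases, where the odd block genuinely does depend on $\nu^g$ --- is never computed directly, but is obtained from the upper range by Poincar\'e duality of the closed manifold $N_{g+1}^\#$. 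Your proposed Lefschetz-duality matching of the odd defect $\delta_r$ with the kernel intersection, which you leave as a plan and flag as the main obstacle, is thus both unproven and unnecessary; without it, and with the even block mishandled, neither direction of the ``if and only if'' is established.
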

\vspace{.3cm}

\begin{proof}
We first note that (\ref{eq:kercap}) holds whenever $\rho_r^g$ is injective. Thus Lemma \ref{lemma:rho} implies (\ref{eq:kercap}) for the ranges $r\geqslant 3g+1$ with $r\equiv 0,1$ (mod 3), and $r\leqslant 3g+1$. We now focus on the cases in which $r\geqslant 3g+2$ and $r\equiv 2$ (mod 3). First suppose $r\geqslant 3g+4$ and $r\equiv 2$ (mod 3). Referring to the diagram for $\psi_r^{1,g}$ in Figure \ref{fig:dots2}, and using the fact that $\rho_{r-3}^g$ is surjective, the kernel is seen to have dimension

\vspace{.2cm}
\begin{equation}\label{eq:kerpsi}
    |\text{ker}(\lambda_r^{1,g})| \; = \; |\text{ker}(\rho_{r-3}^g)| + |\text{ker}(\mu_{r-1}^g)| + |\text{ker}(\rho_r^g)\cap \text{ker}(\nu_{r-2}^g)|.
\end{equation}
\vspace{.2cm}

\noindent Using our formulae from Section \ref{sec:preliminaries}, this kernel is equal to $m_{r-2}^g + h_{r-1}^g$ if and only if (\ref{eq:kercap}) holds. In case $|\text{ker}(\rho_r^g)\cap \text{ker}(\nu_{r-2}^g)|=0$ does hold, the cokernel is given by

\vspace{.2cm}
\[
    |\text{coker}(\lambda_r^{1,g})| \; = \; 2 h_{r-3}^g + h_{r-4}^g - m_{r+1}^g.
\]
\vspace{.2cm}

\noindent Together with items 3 and 6 from Lemma \ref{lemma:lambdaker} we derive \ref{eq:rec3} for $r\geqslant 3g+5$ with equality holding, and by Poincar\'{e} duality, \ref{eq:rec1} for $r\leqslant 3g-2$ with equality. The case of $r=3g+2$ is similarly handled. From this argument it is clear that (\ref{eq:kercap}) holds if and only if \ref{eq:rec1}$-$\ref{eq:rec3} are equalities, and that more generally $|\text{ker}(\rho_r^g)\cap \text{ker}(\nu_{r-2}^g)|\geqslant 0$ implies the inequalities \ref{eq:rec1}$-$\ref{eq:rec3} for all $r$.
\end{proof}

\vspace{.4cm}

\noindent Recall from the introduction our claim that equality in \ref{eq:rec1}$-$\ref{eq:rec3} follows if $\nu_r^g$ has maximal rank for all $r$. We explain this here for \ref{eq:rec1}. For the range beyond the middle dimension, this asks for $\nu_r^g$ to be injective, and thus our claim from the introduction follows from Lemma \ref{lemma:kercap}. However, we can also see how surjectivity of $\nu_r^g$ in the range below the middle dimension would suffice: here the kernel of $\psi_r^{1,g}$ is computed to have dimension $h_{r-1}^g-m_{r-1}^g-m_{r-3}^g$. This is obtained by splitting off the kernel of $\nu_{r-1}^g$, which contributes $h_{r-1}^g-\check{n}_{r-1}^g$, then cancelling the remaining isomorphic part of $\nu_{r-1}^g$ against $\rho_{r-1}^g$, and accounting for the kernel of $\mu_{r-3}^g=\nu_{r-3}^g\oplus\rho_{r-3}^g$ left over. Computing the cokernels and applying (\ref{eq:lambda}) yields equality in \ref{eq:rec1}. 


\vspace{.7cm}

\vspace{.3cm}

\section{Computations for the genus 2 decomposition}\label{sec:genus2}

\begin{figure}[t]\label{fig:ss4}
  \centering{
\includegraphics[scale=1.1]{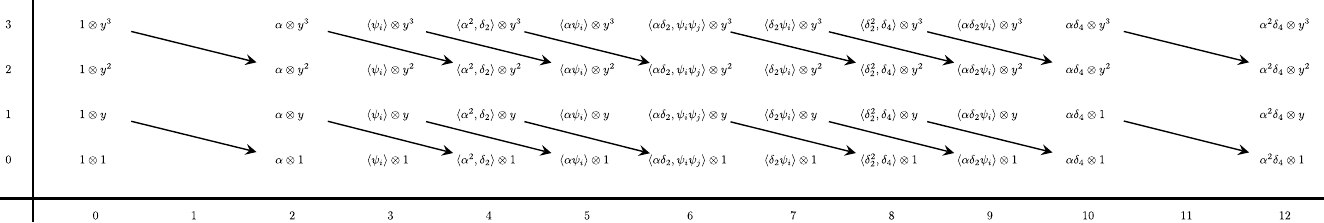}
}
  \caption{{\small{The $E_2$ page in the Leray-Serre spectral sequence for $N_3^\#$. }}}
\end{figure}

In this final section we sketch the computations that show $\nu_r^g$ is of maximal rank for all $r$ with $g=2$. None of these are needed for the results stated in the introduction.

One might try to prove equality in \ref{eq:rec1}$-$\ref{eq:rec3} by using other Mayer-Vietoris decompositions. For example, moving a level down from (\ref{eq:mv1}), we may consider the genus 2 decomposition

\vspace{.2cm}
\begin{equation}\label{eq:mv2}
	 N_{g+1}^\# \; = \;  N_2^+ \times N_{g-1}^\# \bigcup_{S^2\times N_{2}^\# \times N_{g-1}^\#} N_2^\# \times N_{g-1}^+
\end{equation}
\vspace{.2cm}

\noindent which may be described in a similar manner as was the genus 1 decomposition (\ref{eq:mv1}) in \cite[\S 4]{newstead-mv}. Just as in the previous case, we consider the Mayer-Vietoris sequence associated with (\ref{eq:mv2}). We have a map $\lambda_{r}^{2,g-1}$ which we decompose into two parts, as follows:

\vspace{.25cm}

\begin{center}
\begin{tikzcd}[column sep=large] 
H_r(N_2^+\times N_{g-1}^\#) &  H_r(S^2\times N_2^\#\times N_{g-1}^\#) \arrow[to=1-1, red]  \arrow[to=1-3, blue] & H_r(N_2^\#\times N_{g-1}^+) 
\end{tikzcd}
\end{center}

\vspace{.25cm}

\noindent We also have the analogue of (\ref{eq:lambda}) from the exactness of the Mayer-Vietoris sequence:

\vspace{.2cm}
\begin{equation}
	h_r^{g+1} \; = \; |\text{coker}(\lambda_r^{2,g-1})| + |\text{ker}(\lambda_{r-1}^{2,g-1})|\label{eq:lambda2}
\end{equation}
\vspace{.2cm}

\noindent As before, we expand $\lambda_r^{2,g-1}$ into its various K\"{u}nneth components, and obtain the diagram in Figure \ref{fig:lambda2}. Here we note that the betti numbers $\check{n}_r^{g-1}$ of $N_{g-1}^+$ are easily computed from our knowledge of $h_r^2$ from Section \ref{sec:genus1} and the equations in Section \ref{sec:preliminaries}. These are listed in Figure \ref{table:2}. All of the unboxed data in the table is computed from the formulae in Section \ref{sec:preliminaries}. We will momentarily sketch how one can fill in the boxed data. Here we remark that after computing this data and attempting to adapt the Mayer-Vietoris argument of Section \ref{sec:mainarg} to this situation, it becomes apparent that more information about the maps $\nu_r^g$ and their interactions with the $\rho_r^g$ is required in order to compute the relevant kernels and cokernels.

We can compute the data in Figure \ref{table:2} by specializing to the $2+1$ and $2+2$ Mayer-Vietoris decompositions, setting $g=2$ and $g=3$ in (\ref{eq:mv2}). To carry this out we need the $\Z/2$ betti numbers of the moduli spaces $N_3^\#$ and $N_4^\#$. These are computed via Proposition \ref{prop:ss}, which uses the Leray-Serre spectral sequence, and the ring structures of $H^\ast(N_g;\F)$ for $g\in\{3,4\}$, which are available from \cite{ss-mod2}. See Figure \ref{fig:ss2} for an illustration of the genus 3 case. The numbers obtained are of course what appear in Figure \ref{fig:introtables}, and agree with the general conjectural recursions.

\begin{figure}[t]
  \centering{
\includegraphics[scale=1.05]{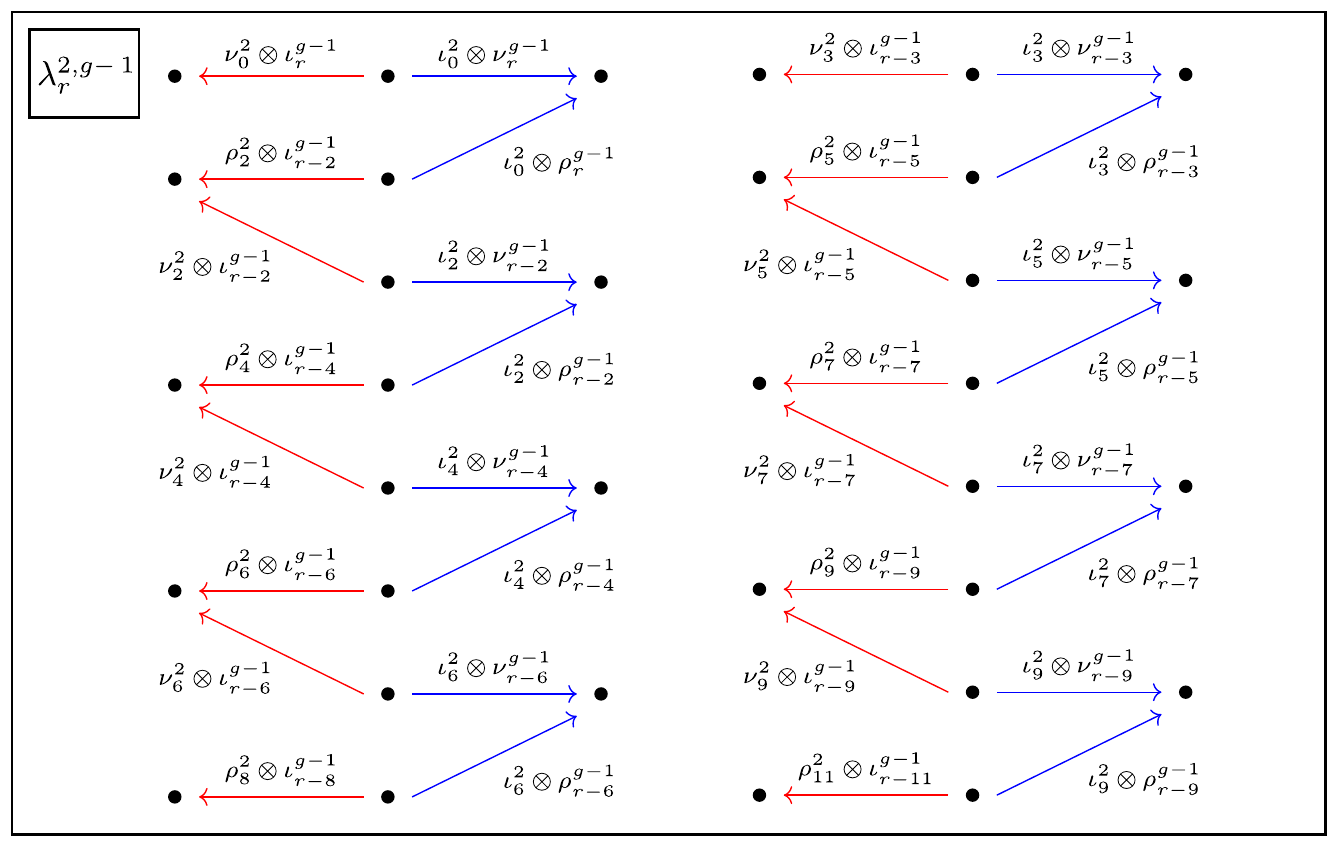}
}
\vspace{.2cm}
\caption{{\small{The map $\lambda_r^{2,g-1}$ expanded using the K\"{u}nneth Theorem.}}}\label{fig:lambda2}
\end{figure}

The $2+1$ Mayer-Vietoris decomposition of the genus 3 moduli space, which can also be viewed as one of the genus 1 decompositions (\ref{eq:mv1}), can be used to compute the following, in the listed order:

\vspace{.25cm}

\begin{enumerate}
    \item Use $\lambda_3^{1,2}$ to conclude that $\nu_2^2$ is an isomorphism.
    \item Use $\lambda_{11}^{1,2}$ to conclude that $\nu_9^2$ is an isomophism.
    \item Use $\lambda_6^{1,2}$ to conclude that $|\text{ker}(\nu_{3}^2)\cap\text{ker}(\mu_5^2)|=1$, implying that $\nu_5^2$ has rank 4 or 5.
    \item Use $\lambda_8^{1,2}$ to conclude that $|\text{ker}(\nu_6^2)\cap\text{ker}(\rho_8^1)|=0$, implying that $\nu_6^2$ has rank 4 or 5.
\end{enumerate}

\vspace{.25cm}

\noindent In each step we use the diagram of maps in Figure \ref{fig:lambda2} with $g=2$, the appropriate value of $r$, and linear algebra over $\F$ just as in Section \ref{sec:mainarg}. In particular, the key device is our use of the relation (\ref{eq:lambda2}) along with our aforementioned knowledge of the betti numbers $h_r^3$, which constrains the possible dimensions of the kernels and cokernels of $\lambda_{r}^{1,2}$. We mention that we can deduce a bit more than what is listed in item 3, from its computation: the kernel of the map $\nu_3^2 (\rho_5^2)^{-1}\nu_5^2$ is 1-dimensional. This information is useful for the reader who wishes to complete the subsequent steps.

\begin{figure}[t]
  \centering{
  \setlength{\extrarowheight}{7pt}
\begin{tabularx}{0.65\textwidth}{c |*{6}{Y}}
$r$        & $h_r^2$ & $\check{n}_r^2$ & $\mu_{r}^2$ & $ \rho_{r}^2$ & $\nu_{r}^2$ \tabularnewline \midrule
0           & $1$ & $1$ & $1_1^1$ & $0_0^1$ &  $1_1^1$\tabularnewline
1           & $0$ & $0$ & $0_0^0$ & $0_0^0$ &  $0_0^0$ \tabularnewline 
2           & $1$ & $1$ & $1_2^1$ & $1_1^1$ &   \fbox{$1_1^{1\phantom{1}}$} \tabularnewline
3           & $5$ & $4$ & $4_5^4$ & $0_0^4$ &   $4_5^4$\tabularnewline 
4           & $5$ & $1$ & $1_6^1$ & $1_1^1$ & \fbox{$\text{1}_5^{1\phantom{1}}$}\tabularnewline 
5           & $5$ & $5$ & $5_{10}^5$ & $5_5^5$ & \fbox{$\text{5}_5^{5\phantom{1}}$}\tabularnewline
6           & $5$ & $11$ & $5_{10}^{11}$ & $5_5^{11}$ & \fbox{$\text{5}_5^{11}$} \tabularnewline
7           & $1$ & $5$ & $5_6^5$ & $5_5^5$ &  \fbox{$\text{1}_1^{5\phantom{1}}$}\tabularnewline 
8           & $0$ & $1$ & $1_5^1$ & $1_5^1$ &   $0_0^1$\tabularnewline
9           & $1$ & $1$ & $1_2^1$ & $1_1^1$ &   \fbox{$1_1^{1\phantom{1}}$}\tabularnewline 
10           & $0$ & $0$ & $0_0^0$ & $0_0^0$ & $0_0^0$\tabularnewline 
11           & $0$ & $0$ & $0_1^0$ & $0_1^0$ & $0_0^0$\tabularnewline
\end{tabularx}
\vspace{.3cm}
}
  \caption{Genus 2 data.}\label{table:2}
  \vspace{.4cm}
\end{figure}

We may then proceed to use the $2+2$ decomposition of the genus 4 moduli space in a similar fashion to complete the following two steps:
 
\vspace{.25cm}

\begin{enumerate}
    \item[5.] Use $\lambda_7^{2,2}$ to conclude that $\nu_4^2$ is non-zero, and hence surjective.
    \item[6.] Use $\lambda_{13}^{2,2}$ to conclude that $\nu_7^2$ is non-zero, and hence injective.
\end{enumerate}

\vspace{.25cm}

\noindent It then remains to show that the ranks of $\nu_5^2$ and $\nu_6^2$ are $5$, instead of $4$. This computation is less direct. However, the joint constraints imposed by inspecting $\lambda_{r}^{2,2}$ for $r=8,9,10,12$ lead to the resolution of this claim, which, although entirely elementary, is somewhat tedious. For the reader interested in following this computation through we include the following table, which lists the final dimensions for some of the relevant kernels and cokernels.\\

\vspace{.5cm}

\begin{figure}[h]
\centering{
\setlength{\extrarowheight}{4pt}
{{\small{
\begin{tabularx}{1\textwidth}{c |*{22}{Y}}
$r$ & 0 & 1 & 2 & 3 & 4 & 5 & 6 & 7 & 8 & 9 & 10 & 11 & 12 & 13 & 14 & 15 & 16 &17 & 18 & 19 & 20 & 21 \tabularnewline \midrule
$h_r^4$ & 1 & 0 & 1 & 8 & 1 & 8 & 29 & 9 & 37 & 93 & 93 & 93 & 93 & 37 & 9 & 29 & 8 & 1 & 8 & 1 & 0 & 1 \tabularnewline
$|\text{cok}(\lambda_r^{2,2})|$ & 1 & 0 & 1 & 8 & 1 & 8 & 29 & 8 & 29 & 68 & 85 & 68 & 85 & 20 & 1 & 12 & 0 & 0 & 0 & 0 & 0 & 0 \tabularnewline
$|\text{ker}(\lambda_r^{2,2})|$ & 0 & 0 & 0 & 0 & 0 & 0 & 1 & 8 & 25 & 8 & 25 & 8 & 17 & 8 & 17 & 8 & 1 & 8 & 1 & 0 & 1 & 0 \tabularnewline
\end{tabularx}
}}}
}
\end{figure}

\vspace{.7cm}

\bibliography{main}
\bibliographystyle{alpha}

\vspace{.85cm}

\footnotesize

  \textsc{Simons Center for Geometry and Physics,
    Stony Brook, NY}\par\nopagebreak
  \textit{E-mail address:}\;\texttt{cscaduto@scgp.stonybrook.edu}

\vspace{.35cm}

    \textsc{Department of Mathematics, University of California,
    Los Angeles, CA}\par\nopagebreak
  \textit{E-mail address:}\;\texttt{mstoffregen@math.ucla.edu}

\end{document}